\newtheorem{prop}{Proposition}[section]
\newtheorem{thm}[prop]{Theorem}
\newtheorem{lemma}[prop]{Lemma}
\newtheorem{coroll}[prop]{Corollary}
\theoremstyle{remark}
\newtheorem{rmk}[prop]{Remark}
\theoremstyle{definition}
\numberwithin{equation}{section}
\renewcommand{\P}{\mathbb{P}}
\newcommand{\E}{\mathbb{E}}
\newcommand{\F}{\mathbb{F}}
\renewcommand{\H}{\mathbf{H}}
\renewcommand{\L}{\mathbb{L}}
\newcommand{\LL}{\mathcal{L}}
\newcommand{\erre}{\mathbb{R}}
\newcommand{\enne}{\mathbb{N}}
\renewcommand{\epsilon}{\varepsilon}
\newcommand{\ds}{\displaystyle}
\DeclarePairedDelimiter\norm{\lVert}{\rVert}
\DeclarePairedDelimiterX\ip[2]{\langle}{\rangle}{#1,#2}
\title{On the maximal inequalities of Burkholder, Davis and Gundy}
\author{Carlo Marinelli\footnote{Department of Mathematics, University College London, Gower Street, London WC1E 6BT, UK.} \and Michael R\"ockner\footnote{Fakult\"at f\"ur Mathematik, Universit\"at Bielefeld, Germany.}}
\date{\normalsize 26 December 2012}
\begin{document}
\maketitle

\begin{abstract}
  We give a proof of the maximal inequalities of Burkholder, Davis and
  Gundy for real as well as Hilbert-space-valued local martingales
  using almost only stochastic calculus. Some parts of the exposition,
  especially in the infinite dimensional case, appear to be original.
\medskip\par\noindent
\emph{Keywords and phrases:} maximal inequalities, infinite
dimensional stochastic analysis, semimartingales.
\smallskip\par\noindent
\emph{2010 Mathematics Subject Classification:} 60G44
\end{abstract}

\section{Introduction}
The aim of this work is to provide a self-contained proof of the
Burkholder-Davis-Gundy (BDG) inequality for local c\`adl\`ag martingales,
both in finite and infinite dimension, using only stochastic calculus
and functional-analytic arguments. In particular, in the case of
real local martingales we provide a proof entirely based on stochastic
calculus for semimartingales, and, in the case of Hilbert-space-valued
local martingales, the proof uses some (relatively elementary)
techniques from duality of Banach spaces, interpolation of operators,
and vector measures. We also include a (known) proof for continuous
local martingales, entirely based on stochastic calculus, which serves
as motivation for the general case.
Even though we do not claim to have any original result, some of the
proofs appear to be new.

Let $(\Omega,\mathcal{F},\F,\P)$ be a filtered probability space and
$\H$ a real separable Hilbert space with norm $\norm{\cdot}$ and inner
product $(\cdot,\cdot)$. The goal is to prove the following theorem.
\begin{thm}     \label{thm:BDG}
  Let $M$ be an $\H$-valued $\F$-local c\`adl\`ag martingale with
  $M_0=0$. Then one has, for any $p \in [{1,\infty}[$ and for any
  $\F$-stopping time $\tau$,
  \begin{equation}
    \label{eq:BDG}
    \E [M,M]_\tau^{p/2} \lesssim_p \E \sup_{t \leq \tau} \|M_t\|^p
    \lesssim_p \E [M,M]_\tau^{p/2}.
  \end{equation}
  Moreover, if $M$ is continuous, then \eqref{eq:BDG} also holds for
  $p \in ]{0,2}[$, i.e.
  \begin{equation}
    \label{eq:BDGc}
    \E \sup_{t \leq \tau} \norm{M_t}^p \eqsim_p \E[M,M]_\tau^{p/2}
    = \E\ip{M}{M}_\tau^{p/2}
  \end{equation}
  for all $p \in ]{0,\infty}[$.
\end{thm}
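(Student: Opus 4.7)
My plan is to split Theorem~\ref{thm:BDG} into three essentially independent parts: the continuous Hilbert-valued case, the real-valued càdlàg case, and the extension from $\erre$ to $\H$ in the càdlàg setting. A preliminary localization reduces each part to a situation where all random variables in sight are integrable; constants depending only on $p$ will then transfer back by monotone convergence.

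For the continuous case I would apply Itô's formula to $\phi(x)=\norm{x}^p$ with $p\ge 2$. Since $\phi\in C^2(\H)$ with $\norm{\phi''(x)}\lesssim_p\norm{x}^{p-2}$, this gives, after localization and taking expectations,
\[
\E\norm{M_\tau}^p \le C_p\,\E\int_0^\tau\norm{M_s}^{p-2}\,d\ip{M}{M}_s.
\]
Doob's $L^p$-inequality for the submartingale $\norm{M}$ upgrades the left-hand side to $\E\sup_{t\le\tau}\norm{M_t}^p$, while Hölder's inequality factors the right-hand side as $(\E\sup\norm{M}^p)^{(p-2)/p}(\E\ip{M}{M}_\tau^{p/2})^{2/p}$; dividing yields the upper bound $\E\sup\norm{M}^p\lesssim_p\E\ip{M}{M}_\tau^{p/2}$. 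The reverse direction rests on the Itô identity $\ip{M}{M}_t=\norm{M_t}^2-2\int_0^t(M_s,dM_s)$: subadditivity of $x\mapsto x^{p/2}$ for $p\le 2$ (Minkowski for $p>2$), the upper bound just proved applied to the stochastic integral $\int_0^\cdot(M_s,dM_s)$ (whose quadratic variation is dominated by $\sup\norm{M}^2\cdot\ip{M}{M}$), and Cauchy--Schwarz combined with Young close the estimate. The range $p\in]0,2[$ is then provided by Lenglart's domination inequality with $X_t=\norm{M_t}^2$ and $A_t=\ip{M}{M}_t$, the hypothesis $\E X_\tau\le\E A_\tau$ being optional stopping of the local martingale $\norm{M}^2-\ip{M}{M}$; together with the lower bound just established this gives~\eqref{eq:BDGc}.

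For real-valued càdlàg local martingales the Itô formula contributes an additional sum of jump terms; for $p\ge 2$ a Taylor bound controls them by $C_p[M,M]_\tau^{p/2}$, since $\sum|\Delta M_s|^p\le\bigl(\sup_s|\Delta M_s|^2\bigr)^{(p-2)/2}[M,M]_\tau\le[M,M]_\tau^{p/2}$, and the Doob--Hölder mechanism above still closes the estimate. The genuinely delicate range is $p\in[1,2[$, where $x\mapsto|x|^p$ is not $C^2$; there I would invoke the Davis decomposition $M=M'+M''$, apply the $p\ge 2$ case together with a Lenglart-type good-$\lambda$ argument to $M'$ and treat $M''$ through its integrable variation, then interpolate. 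The $p=1$ case is the principal technical obstacle I anticipate. Finally, to pass from $\erre$ to a general separable $\H$ I would approximate $M$ by $P_nM$, where $P_n$ is the orthogonal projection onto $\H_n=\mathrm{span}(e_1,\ldots,e_n)$, apply BDG to each finite-dimensional $P_nM$ with a constant depending only on $p$, and let $n\to\infty$: since $\norm{P_nM_t}\uparrow\norm{M_t}$ and $[P_nM,P_nM]_t\uparrow[M,M]_t$, monotone convergence transfers the estimates to $M$. The dimension-independence of the finite-dimensional constant is precisely where the Banach-space duality, interpolation and vector-measure tools announced in the introduction enter, since a naive coordinatewise application of the real BDG would produce a constant growing with $n$.
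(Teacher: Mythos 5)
Your continuous-case argument is sound and is essentially the route the paper itself acknowledges in a remark (It\^o plus Doob--H\"older for the upper bound with $p\ge 2$, the Garsia integration-by-parts trick for the lower bound, and Lenglart to descend in the exponent); the paper's main exposition instead uses auxiliary martingales $N=H\cdot M$ \`a la Getoor--Sharpe, but both work for continuous $M$. The problems begin exactly where you say they do, and you do not resolve them. For real c\`adl\`ag martingales with $1\le p<2$ your plan is ``Davis decomposition, a Lenglart-type good-$\lambda$ argument for the part with small jumps, treat the other part through its variation, then interpolate,'' and you flag $p=1$ as ``the principal technical obstacle.'' That is a statement of intent, not a proof: the Lenglart/good-$\lambda$ mechanism fails for discontinuous local martingales precisely because the dominating process $[M,M]$ is not predictable, which is why the paper replaces it with a genuinely different device --- \emph{conditional} upper and lower bounds for martingales whose jumps satisfy $\norm{\Delta M_-}\le D_-$ for an increasing adapted $D$, proved via auxiliary martingales $N=H_-\cdot M$ with $H$ built from $\varepsilon+M^*+D$ or $\varepsilon+[M,M]+D^2$, together with a calculus lemma for finite-variation processes. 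These conditional bounds are then combined with Davis' decomposition $M=L+K$ (with $\norm{\Delta L}\le 4S_-$) and an $\L_p$ estimate for the compensator of a process of integrable variation ($\norm{\tilde V_\infty}_{\L_p}\le p\norm{V_\infty}_{\L_p}$, proved by duality against a Doob martingale) to control $\int|dK|$ by $\norm{S_\infty}_{\L_p}\lesssim\norm{M^*_\infty}_{\L_p}$. None of these ingredients appears in your outline, and without them the case $p\in[1,2[$ is open.

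The Hilbert-space step has a second gap of the same kind. Reducing to $P_nM$ and invoking monotone convergence is fine, but the entire content of the theorem is then the dimension-independence of the finite-dimensional constant, and you only \emph{name} the tools (``duality, interpolation and vector-measure arguments'') without saying what they prove. In the paper the It\^o-formula arguments are carried out intrinsically in $\H$ (so no dimension enters), and the single step that genuinely fails to generalize is the compensator estimate for $p>1$, whose real-valued proof splits $K^1$ into its positive-jump and negative-jump parts --- a one-dimensional trick. The paper circumvents this with three specific results: a $p=1$ vector-measure (Dinculeanu) bound $\E\int|d\tilde X|\le\E\int|dX|$; Stein's inequality $\norm{(\E_{n_k}f_k)}_{\L_p(\ell_2(\H))}\lesssim_p\norm{f}_{\L_p(\ell_2(\H))}$, used to control $[K^2,K^2]$ by $[K^1,K^1]$; and a duality lemma converting lower bounds in $\L_p$ into upper bounds in $\L_{p'}$ via $\E(M_\infty,\xi)=\E[M,N]_\infty$ and Kunita--Watanabe, plus interpolation of the sublinear map $M_\infty\mapsto[M,M]^{1/2}_\infty$ to fill the range $2<p<4$. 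You would need to supply equivalents of all of these for your sketch to close.
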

\noindent Here $[M,M]$ and $\ip{M}{M}$ denote the (scalar) quadratic
variation and Meyer process of $M$, respectively. Throughout the paper
we use standard notation and terminology from the general theory of
processes, our main reference being \cite{Met}.
Setting $\L_p:=L_p(\Omega,\P)$ for convenience of notation, one can
equivalently write \eqref{eq:BDG} as
\[
\bigl\| M^*_\tau \bigr\|_{\L_p} \eqsim_p \bigl\| [M,M]_\tau^{1/2}
\bigr\|_{\L_p},
\]
where $M^*_t := \sup_{s \leq t} \norm{M_s}$.

A proof of the Davis inequality (i.e. of \eqref{eq:BDG} with $p=1$) for
real martingales that uses only stochastic calculus was given by
Meyer \cite{Mey:dual}, adapting the corresponding proof for real
continuous martingales by Getoor and Sharpe \cite{GetSha:conf}, the
key tool being a continuous-time version of Davis' decomposition,
also proved in \cite{Mey:dual}. The contribution of this paper is to
extend the stochastic calculus method of Meyer to the full range $p
\in [{1,\infty}[$ and to the Hilbert-space-valued case. As
we are going to see, a proof based only on stochastic calculus is
available for real martingales, but (unfortunately, perhaps) we have
not been able to provide such a proof for Hilbert-valued
martingales. Additional functional-analytic tools seem necessary. In
particular, we could not extend an $\L_p$ ($p>1$) inequality for
compensators of processes with integrable variation to the
Hilbert-valued case, although this problem was circumvented using a
different estimate (which involves the quadratic rather than the first
variation of compensators) and a simple duality argument.

While we are not aware of any standard reference where the BDG
inequalities for general local martingales (i.e. not necessarily
pathwise continuous) are proved by means of stochastic calculus only,
a proof based on Garsia-Neveu-type lemmata can be found, for instance,
in \cite{DM-mg,kall,LeLePr}. The latter proof covers also the more
general case (not considered here) where the $\L_p$ norm is replaced
by an Orlicz norm.

\medskip

We conclude this section with a few words about notation: we shall
write $a \lesssim b$ to mean that there exists a positive constant $N$
such that $a \leq Nb$. If the contant $N$ depends on the parameters
$p_1,\ldots,p_n$, we shall also write $N=N(p_1,\ldots,p_n)$ and
$\lesssim_{p_1,\ldots,p_n}$. The expression $a \eqsim b$ is equivalent
to $a \lesssim b \lesssim a$. 
The operator norm of an operator $T: E \to F$, with $E$ and $F$ two
Banach spaces, will be denoted by $\norm{T}_{E \to F}$. The spaces of
bounded linear and bilinear maps from $E$ to $F$ will be denoted by
$\mathcal{L}(E,F)$ and $\mathcal{L}_2(E,F)$, respectively.
Furthermore, every (local) martingale appearing below is assumed to be
c\`adl\`ag.


\section{First steps} \label{sec:1st} Let us start with a simple but
helpful remark: given a local $\H$-valued martingale with respect to
the filtration $\F$, one has, for any $\F$-stopping time $\tau$,
$[M^\tau,M^\tau]_\infty=[M,M]_\tau$ and $M^*_\tau =
(M^\tau)^*_\infty$, where $M^\tau$ denotes the stopped martingale $t
\mapsto M_{t\wedge\tau}$. Therefore, by the monotone convergence
theorem, proving \eqref{eq:BDG} is equivalent to proving the
inequalities
\begin{equation}     \label{eq:bidigi}
  \norm[\big]{M^*_\infty}_{\L_p} \lesssim_p 
  \norm[\big]{[M,M]_\infty^{1/2}}_{\L_p} \lesssim_p 
  \norm[\big]{M^*_\infty}_{\L_p}.
\end{equation}
for any $\H$-valued martingale $M$ such that $M_0=0$. (We shall assume
throughout, without explicit mention, that all (local) martingale
start at zero). For convenience, we shall call ``upper bound'' and
``lower bound'' the left-hand and the right-hand inequality in
\eqref{eq:bidigi}, respectively.

In this section we are going to show, using only stochastic calculus,
that the upper bound holds for any $p \geq 2$, and that the lower
bound holds in $\L_{2p}$ provided the upper bound holds in
$\L_p$.
\begin{prop}[Upper bound, $p>2$]
  Let $M$ be an $\H$-valued martingale. One has, for any $p \in
  \left]2,\infty\right[$,
  \begin{equation}     \label{eq:ub}
  \bigl\| M^*_\infty \bigr\|_{\L_p} \lesssim_p 
  \bigl\| [M,M]_\infty^{1/2} \bigr\|_{\L_p}.
  \end{equation}
\end{prop}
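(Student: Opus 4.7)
My plan is to apply Itô's formula to the function $\phi(x) := \norm{x}^p$. For $p \geq 2$ this is of class $C^2$ on $\H$, with $\phi'(x)h = p\norm{x}^{p-2}(x,h)$ and Fréchet Hessian satisfying the operator bound $\norm{\phi''(x)} \leq p(p-1)\norm{x}^{p-2}$. After a standard localization, e.g.\ by the stopping times $\tau_n := \inf\{t : \norm{M_t} + [M,M]_t \geq n\}$, we may assume $M$ is bounded, so that all expectations below are finite and every local martingale arising is a genuine martingale.

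Itô's formula for $\phi(M_t)$ then decomposes $\norm{M_t}^p$ as the sum of (i) a stochastic integral against $M$, which is a martingale of zero mean; (ii) a continuous drift that is controlled in absolute value, via the Hessian bound, by $p(p-1)\int_0^t \norm{M_{s-}}^{p-2}\, d[M^c,M^c]_s$; and (iii) a sum of jump Taylor remainders. For the latter, the pointwise inequality $|\phi(y)-\phi(x)-\phi'(x)\cdot(y-x)| \leq C_p (\norm{x}\vee\norm{y})^{p-2}\norm{y-x}^2$, which follows from the Hessian bound by a $C^2$ Taylor estimate, shows that the jump contribution up to time $t$ is dominated by $C_p (M^*_t)^{p-2}\sum_{s\leq t}\norm{\Delta M_s}^2 \leq C_p (M^*_t)^{p-2}\,[M,M]_t$, since $\Delta[M,M]_s = \norm{\Delta M_s}^2$.

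Taking expectations yields $\E\norm{M_\infty}^p \lesssim_p \E (M^*_\infty)^{p-2}[M,M]_\infty$. Doob's $\L_p$ maximal inequality applied to the submartingale $\norm{M}$ gives $\E (M^*_\infty)^p \lesssim_p \E\norm{M_\infty}^p$, and Hölder's inequality with exponents $p/(p-2)$ and $p/2$ then produces
\[
\E(M^*_\infty)^p \lesssim_p \bigl(\E(M^*_\infty)^p\bigr)^{(p-2)/p}\bigl(\E [M,M]_\infty^{p/2}\bigr)^{2/p}.
\]
Since under localization the left-hand side is finite, we may divide and raise both sides to the $p/2$ power to obtain \eqref{eq:ub}; the general case follows by monotone convergence as we remove the localizing stopping times.

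The main obstacle I anticipate is the rigorous application of Itô's formula to $\phi(x)=\norm{x}^p$ in the infinite-dimensional Hilbert setting, and in particular producing the continuous-drift estimate in the clean form above from the operator-norm bound on $\phi''$. This is handled either by invoking a Hilbert-valued Itô formula directly, or by finite-dimensional approximation exploiting the separability of $\H$ together with the fact that $\phi$ depends only on $\norm{x}$. Controlling the jump sum, by contrast, is essentially automatic once the Taylor remainder estimate is in hand, since boundedness of $M$ makes $\sum_s \norm{\Delta M_s}^2 \leq [M,M]_\infty$ finite.
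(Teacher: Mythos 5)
This is essentially the paper's proof: Itô's formula for $x \mapsto \|x\|^p$ after localization, a Taylor/Hessian bound on the jump remainders giving the $(M^*)^{p-2}[M,M]^d$ term, a Kunita--Watanabe-type reduction of the continuous tensor-bracket term to $\int \|M_s\|^{p-2}\,d[M,M]^c_s$ (which is exactly the technical point you flag, and which the paper settles with an orthonormal-basis expansion), and then Doob, Hölder with exponents $p/(p-2)$ and $p/2$, and division. The one point to tighten is that stopping at $\tau_n$ does \emph{not} make $M$ bounded, since the jump at $\tau_n$ is uncontrolled; the paper's remedy is to assume without loss of generality that $[M,M]^{1/2}_\infty \in \L_p$ and use $M^*_{T_n} \leq n + [M,M]^{1/2}_{T_n}$ to get the integrability needed both to conclude that the stochastic-integral term has zero mean and to justify dividing by $\bigl(\E(M^*_{T_n})^p\bigr)^{(p-2)/p}$.
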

\begin{proof}
  Let us introduce a sequence of stopping times $(T_n)_{n\in\enne}$
  defined by
  \[
  T_n := \inf\big\{t \geq 0:\; \|M_t\| > n \big\}.
  \]
  Let $n \in \enne$ be fixed for the time being, and consider the
  stopped martingale $M^{T_n}$.

  The first and second Fr\'echet derivatives of the function $\phi: \H
  \ni x \mapsto \norm{x}^p \in \erre$ at a generic point $x \in \H$ are
  given, respectively, by
  \[
  \H \simeq \LL(\H,\erre) \ni D\phi(x): u \mapsto p\norm{x}^{p-2}(x,u) 
  \]
  and
  \[
  \H \otimes \H \simeq \LL_2(\H,\erre) \ni D^2\phi(x): (u,v) \mapsto 
  p(p-1) \norm{x}^{p-4} (x,u) \, (x,v).
  \]
  It\^o's formula (in the form stated in \cite[Thm.~27.2,
  p.~190]{Met}) yields, denoting the bilinear form
  $(x,\cdot)\,(x,\cdot)$ by $x \otimes x$, and writing $T$
  instead of $T_n$ for simplicity,
  \begin{equation}     \label{eq:ito}
    \begin{split}
    \|M_T\|^p &= p\int_0^T \|M_{s-}\|^{p-2} M_{s-}\,dM_s\\
    &\quad + \frac12 p(p-1) \int_0^T \|M_s\|^{p-4} (M_s \otimes M_s)
            \,d[\![M,M]\!]^c_s\\
    &\quad + \sum_{s\leq T} \Bigl( \|M_s\|^p - \|M_{s-}\|^p 
             - p\|M_{s-}\|^{p-2} (M_{s-},\Delta M_s) \Bigr).
    \end{split}
  \end{equation}
  Let $(e_i)_{i\in\enne}$ be an orthonormal basis of $\H$, and set
  $M^i=\ip{M}{e_i}$ for all $i \in \enne$. Then we have, thanks to
  Kunita-Watanabe's inequality,
  \begin{align*}
    &\int_0^T \|M_s\|^{p-4} (M_s \otimes M_s) \,d[\![M,M]\!]^c_s\\
    &\qquad = \sum_{i,j} \int_0^T \|M_s\|^{p-4} M_s^iM_s^j d[M^i,M^j]_s^c\\
    &\qquad \leq \sum_{i,j} \left( \int_0^T \|M_s\|^{p-4}
      (M_s^i)^2\,d[M^j,M^j]_s^c \right)^{1/2}
    \left( \int_0^T \|M_s\|^{p-4} (M_s^j)^2\,d[M^i,M^i]_s^c \right)^{1/2}\\
    &\qquad =: \sum_{i,j} a_{ij}^{1/2} b_{ij}^{1/2} = \bigl\|
    a^{1/2}b^{1/2} \bigr\|_{\ell_1} \leq
    \bigl\| a^{1/2} \bigr\|_{\ell_2} \bigl\| b^{1/2} \bigr\|_{\ell_2}\\
    &\qquad = \left( \sum_{i,j} \int_0^T \|M_s\|^{p-4}
      (M_s^i)^2\,d[M^j,M^j]_s^c \right)^{1/2} \left( \sum_{i,j}
      \int_0^T \|M_s\|^{p-4} (M_s^j)^2\,d[M^i,M^i]_s^c \right)^{1/2}\\
    &\qquad = \int_0^T \|M_s\|^{p-2}d[M,M]_s^c
    \leq (M_T^*)^{p-2} \, [M,M]_T^c.
  \end{align*}
  Using Taylor's formula with remainder in Lagrange form, one
  obtains
  \begin{align*}
    &\sum_{s \leq T} \big( \norm{M_s}^p - \norm{M_{s-}}^p
    - p\|M_{s-}\|^{p-2} (M_{s-},\Delta M_s) \big) \\
    &\qquad \leq \frac{p(p-1)}{2} \sup_{s\leq T} \|M_{s-}\|^{p-2}
    \sum_{s \leq T} \|\Delta M_s\|^2 \leq \frac{p(p-1)}{2} 
    (M^*_T)^{p-2} \, [M,M]^d_T,
  \end{align*}
  hence also
  \begin{align*}
    &\frac12 p(p-1) \int_0^T \|M_s\|^{p-4} \bigl( M_s \otimes M_s \bigr)
    \,d[\![M,M]\!]^c_s\\
    &\qquad + \sum_{s \leq T} \Bigl( \|M_s\|^p -
    \|M_{s-}\|^p - p\|M_{s-}\|^{p-2} \ip{M_{s-}}{\Delta M_s} \Bigr)\\
    &\leq \frac{p(p-1)}{2} (M^*_T)^{p-2}
          \, \bigl( [M,M]^c_T + [M,M]^d_T \bigr)
    = \frac{p(p-1)}{2} (M^*_T)^{p-2}\, [M,M]_T.
  \end{align*}
  Since $M^*_T \leq n + [M,M]_T^{1/2}$ and $[M,M]_T^{1/2} \in \L_p$
  imply $M^*_T \in \L_p$, the first term on the right-hand side of
  \eqref{eq:ito} is a martingale, hence its expectation is zero.
  Therefore, taking expectation in \eqref{eq:ito}, Doob's and
  H\"older's inequalities yield
  \[
  \E(M_T^*)^p \lesssim_p \E\|M_T\|^p 
  \lesssim_p \bigl( \E(M_T^*)^p \bigr)^{\frac{p-2}{p}}
    \bigl( \E[M,M]_T^{p/2} \bigr)^{2/p},
  \]
  which is equivalent to
  \[
  \norm[\big]{M_{T_n}^*}_{\L_p} \lesssim_p
  \norm[\big]{[M,M]^{1/2}_{T_n}}_{\L_p}.
  \]
  Passing to the limit as $n \to \infty$, the proof is completed
  thanks to the monotone convergence theorem.
\end{proof}

\begin{coroll}[Lower bounds by upper bounds]
  Let $M$ be an $\H$-valued martingale, and assume that the upper bound
  \eqref{eq:ub} holds for some $p \geq 1$. Then one has
  \[
  \norm[\big]{[M,M]_\infty^{1/2}}_{\L_{2p}} \lesssim_p
  \norm[\big]{M^*_\infty}_{\L_{2p}}.
  \]
\end{coroll}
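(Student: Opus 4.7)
The plan is to reduce the desired lower bound to the upper bound \eqref{eq:ub} applied to an auxiliary real-valued local martingale and then absorb. First, by standard localization---e.g.\ by stopping at $T_n := \inf\{t : [M,M]_t > n\} \wedge \inf\{t : \norm{M_t} > n\}$ and letting $n \to \infty$ by monotone convergence at the end---I may assume that both $[M,M]_\infty^p$ and $(M^*_\infty)^{2p}$ are a priori integrable (otherwise the statement is trivial).

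Introduce the real-valued local martingale $N_t := \int_0^t \ip{M_{s-}}{dM_s}$. It\^o's formula applied to $\phi(x) = \norm{x}^2$ gives $\norm{M_t}^2 = 2 N_t + [M,M]_t$, so that $[M,M]_t \leq (M^*_t)^2 + 2 N^*_t$. Raising to the $p$-th power, taking expectation, and using the assumed upper bound applied to the real martingale $N$, one has
\[
\E [M,M]_\infty^p \lesssim_p \E (M^*_\infty)^{2p} + \E (N^*_\infty)^p
\lesssim_p \E (M^*_\infty)^{2p} + \E [N,N]_\infty^{p/2}.
\]
The quadratic variation of $N$ is controlled by expanding over an orthonormal basis $(e_i)$ of $\H$: writing $M^i := \ip{M}{e_i}$ and repeating verbatim the Kunita-Watanabe / $\ell_1$-vs-$\ell_2$ computation of the previous proof,
\[
[N,N]_t = \sum_{i,j} \int_0^t M^i_{s-} M^j_{s-} \, d[M^i,M^j]_s
\leq \int_0^t \norm{M_{s-}}^2 \, d[M,M]_s \leq (M^*_t)^2 \, [M,M]_t.
\]
Combining with Cauchy-Schwarz in $\L_2$ gives
\[
\E [N,N]_\infty^{p/2} \leq \E \bigl( (M^*_\infty)^p [M,M]_\infty^{p/2} \bigr)
\leq \bigl( \E (M^*_\infty)^{2p} \bigr)^{1/2} \bigl( \E [M,M]_\infty^p \bigr)^{1/2}.
\]

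Setting $A := \E [M,M]_\infty^p$ and $B := \E (M^*_\infty)^{2p}$, the chain of inequalities above reads $A \lesssim_p B + B^{1/2} A^{1/2}$; since $A$ is finite by the initial localization, Young's inequality with a sufficiently small parameter absorbs the last term into the left-hand side, yielding $A \lesssim_p B$, which is the desired estimate. Passing to the monotone limit as $n \to \infty$ removes the localization. The main obstacle---and the reason the localization step is needed---is securing the a priori finiteness of $A$ required to legitimize the absorption; all remaining ingredients are either It\^o's formula or a transcription of the summation trick already deployed in the proof of the previous proposition.
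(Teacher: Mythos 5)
Your proof is correct and follows essentially the same route as the paper's: both rest on the integration-by-parts identity $\norm{M}^2 = 2(M_-\cdot M) + [M,M]$, apply the assumed upper bound to the auxiliary local martingale $N = M_-\cdot M$, bound $[N,N]_\infty$ by $(M^*_\infty)^2[M,M]_\infty$, and absorb the cross term (you via Cauchy--Schwarz plus Young on the numbers $A,B$; the paper via a pointwise Young inequality and $\L_{2p}$ norms). Your explicit localization to secure the a priori finiteness needed for the absorption is a welcome addition that the paper leaves implicit.
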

\begin{proof}
  The integration by parts formula $[M,M] = \norm{M}^2 - 2M_- \cdot M$
  yields
  \[ 
  [M,M]^{1/2}_\infty \leq M^*_\infty + \sqrt{2} \, 
  \sqrt{(M_- \cdot M)^*_\infty},
  \]
  which implies, by the previous proposition,
  \begin{align*}
    \bigl\| [M,M]^{1/2}_\infty \bigr\|_{\L_{2p}} &\leq
    \bigl\| M^*_\infty \bigr\|_{\L_{2p}} + \sqrt{2} \,
    \bigl\| (M_- \cdot M)^*_\infty \bigr\|_{\L_p}^{1/2}\\
    &\lesssim_p \bigl\| M^*_\infty \bigr\|_{\L_{2p}}
    + \bigl\| [M_-\cdot M,M_- \cdot M]_\infty^{1/2} \bigr\|_{\L_p}^{1/2}.
  \end{align*}
  One easily sees that, for any predictable process $\Phi$ with values
  in $\LL(\H,\erre) \simeq \H$, one has $[\Phi \cdot M,\Phi \cdot M]
  \leq \norm{\Phi} \cdot [M,M]$. Therefore, by Young's inequality, for
  any $\varepsilon>0$ there exists $N=N(\varepsilon)$ such that
  \begin{align*}
  [M_-\cdot M,M_- \cdot M]^{1/2}_\infty &\leq 
  \bigl( \|M_-\|^2 \cdot [M,M] \bigr)^{1/2}_\infty\\
  &\leq M_\infty^* \, [M,M]^{1/2}_\infty \leq 
  N (M_\infty^*)^2 + \varepsilon [M,M]_\infty,
  \end{align*}
  whence
  \begin{align*}
  \bigl\| [M_-\cdot M,M_- \cdot M]_\infty^{1/2} \bigr\|_{\L_p} 
  &\leq
  N \bigl\| (M_\infty^*)^2 \bigr\|_{\L_p}
  + \varepsilon \bigl\| [M,M]_\infty \bigr\|_{\L_p}\\
  &= N \bigl\| M_\infty^* \bigr\|^2_{\L_{2p}}
  + \varepsilon \bigl\| [M,M]^{1/2}_\infty \bigr\|^2_{\L_{2p}},
  \end{align*}
  which in turn implies
  \[
  \bigl\| [M,M]^{1/2}_\infty \bigr\|_{\L_{2p}} \lesssim_p
  (1+\sqrt{N}) \bigl\| M_\infty^* \bigr\|_{\L_{2p}} 
  + \sqrt{\varepsilon} \bigl\| [M,M]^{1/2}_\infty \bigr\|_{\L_{2p}}.
  \]
  The proof is completed by choosing $\varepsilon$ sufficiently small.
\end{proof}
\begin{rmk}
  We learned about the simple arguments of the above proofs in
  \cite{Met}, where the upper bound is stated as exercise~6.E.3. Later
  we found that the argument used in the proof of the lower bound is the
  same used by Getoor and Sharpe \cite{GetSha:conf} (in the simpler
  case of real continuous martingale), who in turn attribute it to
  Garsia (probably in the form of a personal communication or an
  unpublished manuscript). Metivier actually writes that the lower
  bound is ``easy'' for any $p \geq 2$.  Unfortunately we have not
  been able to find an easy proof for the case $2 \leq p < 4$. Let us
  also mention that the proof of the above corollary appears also in,
  e.g., \cite{RevYor} (for continuous real martingales, but the
  argument above is almost literally the same).
\end{rmk}

\section{Continuous local martingales}
We provide a proof of \eqref{eq:BDGc} based only on stochastic
calculus, following \cite{GetSha:conf} (this proof has been reproduced
\emph{verbatim} in some textbooks, see e.g. \cite{IkWa,Shgkw}).  This
way one can clearly see the arguments that will be used in Section
\ref{sec:gen} to treat discontinuous martingales, without the many
complications that appear in the general case.

The main idea is that, since we do not have ``tools'' to estimate the
$\L_p$-norms of $M^*_\infty$ and of $[M,M]^{1/2}_\infty$, we try to
reduce to a situation where estimates of $\L_2$-norms of the maximum
and of the quadratic variation of an auxiliary local martingale $N$
(that are already known to hold) would suffice. Reading the proofs in
\cite{GetSha:conf}, it might not be clear, at least at a first sight,
how the auxiliary martingales $N$ are chosen. Our (very minor)
contribution is to show why it is natural to choose precisely those
$N$.

The proof of \eqref{eq:BDGc} is split in several propositions.

\begin{prop}[Upper bound, $p<2$]     \label{prop:ub2-c}
  Let $M$ be an $\H$-valued continuous martingale. One has,
  for any $p \in ]0,2]$,
  \[
  \bigl\| M^*_\infty \bigr\|_{\L_p} \lesssim_p
  \bigl\| [M,M]^{1/2}_\infty \bigr\|_{\L_p} =
  \bigl\| \ip{M}{M}^{1/2}_\infty \bigr\|_{\L_p}. 
  \]
\end{prop}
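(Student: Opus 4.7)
The plan is to reduce the bound for $p \in \left]0,2\right[$ to the $\L_2$ case, where
\[
\E (M^*_\infty)^2 \leq 4\,\E \norm{M_\infty}^2 = 4\,\E [M,M]_\infty
\]
follows from Doob's inequality and the identity $\E \norm{M_\infty}^2 = \E [M,M]_\infty$ (valid after a preliminary localization; recall also that $\ip{M}{M} = [M,M]$ for continuous local martingales, so the two right-hand sides in the statement of the proposition coincide). The natural auxiliary martingale to introduce is $N := M^{T_a}$, where
\[
T_a := \inf\bigl\{ t \geq 0 :\ [M,M]_t > a^2 \bigr\},\qquad a>0.
\]
This choice is motivated by two competing requirements: $N^*_\infty$ should coincide with $M^*_\infty$ outside a small event, and $[N,N]_\infty$ should be small so that the $\L_2$ estimate applied to $N$ carries real information. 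Since $[M,M]$ is continuous, $T_a$ achieves both: $T_a=\infty$ exactly on $\{[M,M]_\infty\leq a^2\}$, and $[N,N]_\infty = [M,M]_{T_a} \leq a^2$ in every case.

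The argument then proceeds from the inclusion
\[
\{M^*_\infty > a\} \subseteq \{[M,M]_\infty^{1/2} > a\} \cup \{(M^{T_a})^*_\infty > a\},
\]
where Chebyshev's inequality combined with the $\L_2$ estimate for $N$ yields
\[
\P\bigl((M^{T_a})^*_\infty > a\bigr) \leq \frac{4\,\E [M,M]_{T_a}}{a^2} = \frac{4\,\E\bigl([M,M]_\infty \wedge a^2\bigr)}{a^2}.
\]
The layer-cake formula
\[
\E (M^*_\infty)^p = p\int_0^\infty a^{p-1}\,\P(M^*_\infty > a)\,da
\]
then splits into two pieces: the first contributes $\E [M,M]_\infty^{p/2}$, and the second, by Fubini and the elementary identity
\[
\int_0^\infty a^{p-3}\bigl(Y \wedge a^2\bigr)\,da = \frac{2}{p(2-p)}\,Y^{p/2},\qquad Y \geq 0,\ p \in \left]0,2\right[,
\]
contributes a constant multiple of $\E [M,M]_\infty^{p/2}$, completing the bound.

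I do not foresee any serious obstacle. The only point requiring care is the standard localization ensuring that $M^{T_a}$ is a genuine square-integrable martingale so that the $\L_2$ identity may be applied; this is handled by composing with a reducing sequence $S_n \uparrow \infty$ and passing to the limit by monotone convergence. The conceptual content of the proof lies entirely in the choice of $T_a$, which we have motivated above.
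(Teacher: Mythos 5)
Your proof is correct, but it takes a genuinely different route from the paper. The paper constructs the auxiliary martingale $N=H\cdot M$ with $H=\sqrt{p/2}\,(\varepsilon+[M,M])^{p/4-1/2}$, so that $[N,N]=(\varepsilon+[M,M])^{p/2}$, and then passes from the $\L_2$ bound for $N^*_\infty$ back to the $\L_p$ bound for $M^*_\infty$ via integration by parts ($M=H^{-1}\cdot N$) and H\"older's inequality with exponents $\frac1p=\frac12+\frac1q$. You instead run the classical stopping-time/good-$\lambda$ argument: stop at $T_a=\inf\{t:[M,M]_t>a^2\}$, use the inclusion $\{M^*_\infty>a\}\subseteq\{[M,M]^{1/2}_\infty>a\}\cup\{(M^{T_a})^*_\infty>a\}$ together with Chebyshev and Doob's $\L_2$ inequality, and integrate with the layer-cake formula; all the steps check out (in particular your identity $\int_0^\infty a^{p-3}(Y\wedge a^2)\,da=\tfrac{2}{p(2-p)}Y^{p/2}$ is correct, and the inclusion and the bound $[M,M]_{T_a}=[M,M]_\infty\wedge a^2$ both use the continuity of $[M,M]$ in an essential way). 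Your argument is essentially the proof of Lenglart's domination inequality specialized to this pair of processes, which the authors explicitly mention in the Remark at the end of Section~3 as an alternative ``very short proof'' of \eqref{eq:BDGc}; it is more elementary and yields explicit constants, but the constant degenerates as $p\to 2$ and, more importantly, the method breaks down for c\`adl\`ag martingales because $[M,M]_{T_a}$ can overshoot $a^2$ by the jump $\norm{\Delta M_{T_a}}^2$. The paper's choice of the $H\cdot M$ construction is precisely what generalizes (with the extra dominating process $D$ for the jumps) to the discontinuous case in Section~\ref{sec:gen}, which is why the authors prefer it here even though your route is shorter for continuous $M$.
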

\begin{proof}
  We apply the ``principle'' outlined above, that is, we look for an
  auxiliary (local) martingale $N$ which is more manageable than $M$,
  and we exploit the inequality $\E(N_\infty^*)^2 \lesssim
  \E[N,N]_\infty$. Let us set $N=H \cdot M$, where the integrand $H$
  is a predictable real process to be determined. Note that the
  identity $[N,N]=H^2\cdot [M,M]$ holds and, by the fundamental
  theorem of calculus,
  \[ 
  [M,M]_\infty^{p/2} = \frac{p}{2} \int_0^\infty
  [M,M]_s^{p/2-1}\,d[M,M]_s.
  \]
  It is thus natural to choose
  $H=\sqrt{p/2}\bigl(\varepsilon + [M,M]\bigr)^{p/4-1/2}$, so that
  $[N,N]=(\varepsilon+[M,M])^{p/2}$, where $\varepsilon>0$ is
  introduced to avoid singularities. Let us now try to obtain a
  relation between $N^*_\infty$ and $M^*_\infty$. Observe that, by the
  associativity property of the stochastic integral, we have $M=H^{-1}
  \cdot N$, as well as, by the integration by parts formula,
  \[
  M_\infty = (H^{-1} \cdot N)_\infty = H_\infty^{-1} N_\infty 
  - \int_0^\infty N_s \,dH^{-1}_s,
  \]
  which implies (taking into account that $s \mapsto H_s$ is
  decreasing, hence $s \mapsto H_s^{-1}$ is increasing)
  \[
  \|M_\infty\| \leq H^{-1}_\infty N^*_\infty 
  + N^*_\infty \int_0^\infty dH^{-1}_s = 2H^{-1}_\infty N^*_\infty,
  \]
  thus also $M^*_\infty \leq 2H^{-1}_\infty N^*_\infty$, as well as
  $\|M^*_\infty\|_{\L_p} \leq 2\|H^{-1}_\infty N^*_\infty\|_{\L_p}$.
  In order to obtain an expression involving the $\L_2$ norm of
  $N_\infty^*$, we apply H\"older's inequality in the form
  \[
  \| XY \|_{\L_p} \leq \norm{X}_{\L_2} \, \|Y\|_{\L_q}, \qquad
  \frac1p = \frac12 + \frac1q,
  \]
  which yields
  \[
  \bigl\| M^*_\infty \bigr\|_{\L_p} 
  \leq 2 \bigl\| H^{-1}_\infty N^*_\infty \bigr\|_{\L_p}
  \leq 2 \bigl\| H^{-1}_\infty \bigr\|_{\L_q} 
  \bigl\| N^*_\infty \bigr\|_{\L_2}.
  \]
  By the definition of $H$ and the identity $q=2p/(2-p)$, one has
  \[
  \norm[\big]{H^{-1}_\infty}_{\L_q} 
  = \sqrt{2/p} \norm[\big]{(\varepsilon+[M,M])^{1/2}_\infty}^{1-p/2}_{\L_p},
  \]
  as well as
  \[
  \norm[\big]{N^*_\infty}_{\L_2} 
  \leq 2 \norm[\big]{[N,N]^{1/2}_\infty}_{\L_2}
  = 2 \bigl\| (\varepsilon+[M,M])^{1/2}_\infty \bigr\|_{\L_p}^{p/2},
  \]
  which allows us to conclude that
  \[
  \bigl\| M^*_\infty \bigr\|_{\L_p} \leq 4 \sqrt{2/p} \,
  \bigl\| (\varepsilon+[M,M])^{1/2}_\infty \bigr\|_{\L_p}.
  \]
  The proof is finished by observing that $\varepsilon>0$ is arbitrary,
  hence the previous inequality also holds with $\varepsilon=0$.
\end{proof}

\begin{prop}[Lower bound, $p>2$]
  Let $M$ be an $\H$-valued continuous martingale. One has, 
  for any $p \in ]2,\infty]$,
  \[
  \bigl\| \ip{M}{M}_\infty^{1/2} \bigr\|_{\L_p} =
  \bigl\| [M,M]_\infty^{1/2} \bigr\|_{\L_p} \lesssim_p 
  \bigl\| M^*_\infty \bigr\|_{\L_p}.
  \]
\end{prop}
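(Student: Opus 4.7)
The plan is to apply the same principle as in Proposition~\ref{prop:ub2-c}: construct an auxiliary $\H$-valued local martingale $N := H \cdot M$, with a suitably chosen real predictable integrand $H$, so that the scalar quadratic variation $[N,N]_\infty$ reproduces $[M,M]^{p/2}_\infty$; the lower bound then follows from an $\L_2$-identity on $N_\infty$ combined with a pathwise estimate. Since $p > 2$, the fundamental theorem of calculus for the continuous increasing process $[M,M]$ gives
\[
[M,M]^{p/2}_\infty = \frac{p}{2} \int_0^\infty [M,M]^{(p-2)/2}_s \, d[M,M]_s,
\]
so the natural choice is $H_s := \sqrt{p/2}\,[M,M]_s^{(p-2)/4}$: this $H$ is continuous, adapted, nondecreasing, with $H_0 = 0$, and $[N,N]_\infty = \int_0^\infty H_s^2 \, d[M,M]_s = [M,M]^{p/2}_\infty$. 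In contrast with Proposition~\ref{prop:ub2-c}, no $\varepsilon$-regularisation is needed because the exponent $(p-2)/4$ is now strictly positive.

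To relate $N_\infty$ to $M^*_\infty$, I would integrate by parts: since $M$ and $H$ are both continuous and $H$ has finite variation, $[H,M] = 0$ and
\[
N_t = H_t M_t - \int_0^t M_s \, dH_s \qquad \text{for every } t \geq 0.
\]
Because $H$ is nondecreasing with $H_0 = 0$, this yields the pathwise bound $\norm{N_t} \leq 2 H_t M^*_t$, and in particular
\[
\norm{N_\infty}^2 \leq 4 H_\infty^2 (M^*_\infty)^2 = 2p\,[M,M]^{(p-2)/2}_\infty (M^*_\infty)^2.
\]
Taking expectations (using $\E\norm{N_\infty}^2 = \E[N,N]_\infty = \E[M,M]^{p/2}_\infty$ for the $\L_2$-martingale $N$), then applying H\"older's inequality with exponents $p/(p-2)$ and $p/2$, yields
\[
\E[M,M]^{p/2}_\infty \leq 2p \bigl(\E[M,M]^{p/2}_\infty\bigr)^{(p-2)/p} \bigl(\E (M^*_\infty)^p\bigr)^{2/p}.
\]
Dividing by $(\E[M,M]^{p/2}_\infty)^{(p-2)/p}$ and taking square roots gives the desired inequality with constant $\sqrt{2p}$.

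The main obstacle is purely technical: to perform this division and to guarantee that $N$ is a genuine $\L_2$-martingale, one needs the expectations above to be finite. This is handled by the standard localisation through the stopping times $T_n := \inf\{t \geq 0 : M^*_t \vee [M,M]_t > n\}$: apply the entire argument to the stopped martingale $M^{T_n}$ (for which every quantity involved is bounded), obtain the inequality for $[M,M]^{1/2}_{T_n}$ and $M^*_{T_n}$, and conclude by monotone convergence as $n \to \infty$.
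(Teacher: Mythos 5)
Your proof is correct and follows essentially the same route as the paper: the same auxiliary martingale $N=H\cdot M$ with $H=\sqrt{p/2}\,[M,M]^{p/4-1/2}$, the same integration by parts giving $\norm{N_\infty}\leq 2H_\infty M^*_\infty$, and the same H\"older step with exponents $p/(p-2)$ and $p/2$ (applied to $\E[H_\infty^2(M^*_\infty)^2]$ rather than to $\norm{H_\infty M^*_\infty}_{\L_2}$, which is the same computation). Your explicit localisation via $T_n$ to justify the $\L_2$-isometry and the final division is a welcome addition that the paper leaves implicit.
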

\begin{proof}
  Let us set, in analogy to the proof of the previous proposition,
  \[
  N := H \cdot M, \qquad H := \sqrt{p/2}[M,M]^{p/4-1/2},
  \]
  so that $[N,N]=[M,M]^{p/2}$, which implies
  \[
  \bigl\| [M,M]_\infty^{1/2} \bigr\|_{\L_p}^{p/2} 
  = \bigl\| [N,N]_\infty^{1/2} \bigr\|_{\L_2}
  = \bigl\| N_\infty \bigr\|_{\L_2(H)}
  \leq \bigl\| N^*_\infty \bigr\|_{\L_2}.
  \]
  The integration-by-parts formula yields
  \[
  N_\infty = (H \cdot M)_\infty = H_\infty M_\infty - \int_0^\infty M_s\,dH_s,
  \]
  from which one infers, since $s \mapsto H_s$ is increasing (because
  $p/2-1>0$), that $\|N_\infty\| \leq N_\infty^* \leq 2 H_\infty
  M^*_\infty$. This in turn implies
  \[
  \bigl\| N^*_\infty \bigr\|_{\L_2} 
  \leq 2 \bigl\| H_\infty M^*_\infty \bigr\|_{\L_2}
  \leq 2 \bigl\| H_\infty \bigr\|_{\L_q} \bigl\| M^*_\infty \bigr\|_{\L_p},
  \]
  where $1/2=p^{-1}+q^{-1}$, i.e. $q=2p/(p-2)$. Using the definition
  of $H$, one has
  \[
  \bigl\| H_\infty \bigr\|_{\L_q} 
  = \sqrt{p/2} \, \bigl\| [M,M]_\infty^{1/2} \bigr\|_{\L_p}^{p/2-1},
  \]
  hence
  \[
  \bigl\| [M,M]_\infty^{1/2} \bigr\|_{\L_p}^{p/2} 
  \leq \bigl\| N^*_\infty \bigr\|_{\L_2}
  \leq \sqrt{p/2} \, \bigl\| [M,M]_\infty^{1/2} \bigr\|_{\L_p}^{p/2-1}
  \bigl\| M^*_\infty \bigr\|_{\L_p},
  \]
  which implies the results by simplifying and rearranging terms.
\end{proof}

Note that both inequalities proved in the last two propositions relied
on (essentially) the same auxiliary local martingale. However,
unfortunately it seems difficult to use once again the same
construction to prove the lower bound in the case $p \in ]0,2[$. One
can immediately convince himself about this by inspection of the proof
of Proposition \ref{prop:ub2-c}. On the other hand, a similar proof
will still do, provided a different auxiliary martingale is used.
\begin{prop}[Lower bound, $p<2$]     \label{prop:lb2-c}
   Let $M$ be an $\H$-valued continuous martingale. One has, for any $p
  \in ]0,2[$,
  \[
  \bigl\| [M,M]_\infty^{1/2} \bigr\|_{\L_p} \lesssim_p 
  \bigl\| M^*_\infty \bigr\|_{\L_p}.
  \]
\end{prop}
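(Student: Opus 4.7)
The plan is to follow the template of the previous two propositions and introduce an auxiliary local martingale $N := H \cdot M$, with a scalar predictable integrand $H$, so as to exploit the $\L_2$-identity $\E\norm{N_\infty}^2 = \E[N,N]_\infty$. The remark preceding the statement already warns that the natural choice $H = f([M,M])$ used so far cannot be recycled here: any power of $(\varepsilon + [M,M])$ yields an integration-by-parts bound on $N^*$ that blows up as $\varepsilon\downarrow 0$. My proposal is to let $H$ depend on $M^*$ instead, namely
\[
H_t := (\varepsilon + M^*_t)^{p/2-1}, \qquad \varepsilon > 0,
\]
a bounded, continuous, non-increasing predictable process (since $M^*$ is continuous), so that $N = H \cdot M$ is a well-defined local $\H$-valued martingale.

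The argument then rests on three estimates. First, from $[N,N]_\infty = \int_0^\infty (\varepsilon + M^*_s)^{p-2}\,d[M,M]_s$ together with the monotonicity of $M^*$ and $p-2<0$, one obtains
\[
[N,N]_\infty \,\geq\, (\varepsilon + M^*_\infty)^{p-2}\,[M,M]_\infty.
\]
Second, integration by parts (using $[H,M]=0$ since $H$ has finite variation) gives $N_t = H_t M_t - \int_0^t M_s\,dH_s$; combining $\norm{M_s}\leq M^*_s \leq \varepsilon+M^*_s$ with the explicit formula $-dH_s = (1-p/2)(\varepsilon+M^*_s)^{p/2-2}\,dM^*_s$ and the elementary identity $\int_0^t(\varepsilon+M^*_s)^{p/2-1}\,dM^*_s = (2/p)[(\varepsilon+M^*_t)^{p/2}-\varepsilon^{p/2}]$, a short calculation yields $\norm{N_t} \leq (2/p)\,(\varepsilon + M^*_t)^{p/2}$. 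Third, assuming (as we may) $\norm{M^*_\infty}_{\L_p}<\infty$, this pointwise bound shows $N^*_\infty \in \L_2$, so $N$ is a true $\L_2$-martingale and $\E[N,N]_\infty = \E\norm{N_\infty}^2 \leq \E(N^*_\infty)^2 \leq (2/p)^2\,\E(\varepsilon+M^*_\infty)^p$.

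To extract $\E[M,M]_\infty^{p/2}$ from the resulting bound $\E(\varepsilon+M^*_\infty)^{p-2}[M,M]_\infty \leq (2/p)^2\,\E(\varepsilon+M^*_\infty)^p$, I would then write
\[
[M,M]_\infty^{p/2} = \bigl((\varepsilon+M^*_\infty)^{p-2}\,[M,M]_\infty\bigr)^{p/2}\,(\varepsilon+M^*_\infty)^{(2-p)p/2}
\]
and apply H\"older's inequality with conjugate exponents $2/p$ and $2/(2-p)$; the two resulting factors of $\E(\varepsilon+M^*_\infty)^p$ (to the powers $p/2$ and $(2-p)/2$) combine to give $\E[M,M]_\infty^{p/2} \lesssim_p \E(\varepsilon+M^*_\infty)^p$, after which $\varepsilon\downarrow 0$ and monotone convergence conclude. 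The only real difficulty is the initial guess: once $H$ is chosen to depend on $M^*$ rather than on $[M,M]$, the pointwise bound in the second step automatically has a constant independent of $\varepsilon$, and the H\"older interpolation takes care of the rest.
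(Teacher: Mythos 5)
Your proposal is correct and follows essentially the same route as the paper: the same auxiliary martingale $N=H\cdot M$ with $H=(\varepsilon+M^*)^{p/2-1}$, the same lower bound $[N,N]_\infty\geq H_\infty^2[M,M]_\infty$, the same integration-by-parts estimate $N^*_\infty\leq \tfrac2p(\varepsilon+M^*_\infty)^{p/2}$, and the same H\"older interpolation with exponents $2/p$ and $2/(2-p)$ (which the paper phrases as H\"older on $\L_p$-norms with $q=2p/(2-p)$).
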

\begin{proof}
  We introduce once again an auxiliary local martingale $N:= H \cdot
  M$, and then we compare the $\L_2$-norms of $[N,N]_\infty^{1/2}$ and
  $N^*_\infty$. It is (intuitively) clear that, in order to exploit
  the inequality $\norm[\big]{[N,N]_\infty^{1/2}}_{\L_2} \lesssim
  \norm[\big]{N^*_\infty}_{\L_2}$, one would need to establish an
  upper bound for $N^*_\infty$ in terms of $M^*_\infty$. For this
  purpose, let us use once again the integration-by-parts formula,
  assuming that $H$ is a real predictable process with finite variation
  which will be defined later. Then one has
  \[
  N_t = (H \cdot M)_t = H_t M_t 
  + \int_0^t M_s\,d(-H_s) \qquad \forall t>0.
  \]
  This ``starting point'' already suggests how to choose $H$: in fact,
  neglecting the integral on the right hand side, we see that
  $\E\|N_t\|^2$ should be of the order of $\E H_t^2(M_t^*)^2$, and we
  would like this term to be of the order of $\E (M_t^*)^p$, which
  suggests that we may try taking $H$ of the order of $(M^*)^{p/2-1}$.
  Let us then set
  \[
  H := \bigl( \varepsilon + M^* \bigr)^{p/2-1},
  \]
  where $\varepsilon>0$ is arbitrary and is introduced to avoid
  singularities. The identity $[N,N]=H^2 \cdot [M,M]$ implies
  $[N,N]_\infty \geq H^2_\infty [M,M]_\infty$, because $s \mapsto
  H^2_s$ is decreasing. Similarly, the integration-by-parts formula,
  the definition of $H$, and elementary calculus imply the estimate
  \begin{align*}
  N^*_\infty &\leq H_\infty M^*_\infty + \int_0^t M^*_s\,d(-H_s)\\
  &\leq \bigl( \varepsilon + M^*_\infty \bigr)^{p/2}
  + \int_0^\infty (\varepsilon + M^*_s) \,
  d\bigl(-(\varepsilon + M^*_s)^{p/2-1}\bigr)\\
  &\leq \bigl( \varepsilon + M^*_\infty \bigr)^{p/2}
  + ( 1-p/2 ) \int_0^\infty
  (\varepsilon + M^*_s)^{p/2-1}\,d(\varepsilon+M^*_s)\\
  &\leq \frac2p \bigl( \varepsilon + M_\infty^* \bigr)^{p/2}.
  \end{align*}
  Collecting estimates and taking $\L_2$-norms, we have
  \begin{align*}
  \bigl\| H_\infty [M,M]^{1/2}_\infty \bigr\|_{\L_2} 
  &\leq \bigl\| [N,N]^{1/2}_\infty \bigr\|_{\L_2}\\
  &\leq \bigl\| N^*_\infty \bigr\|_{\L_2}
  \leq \frac2p \bigl\| \bigl( 
       \varepsilon + M_\infty^* \bigr)^{p/2} \bigr\|_{\L_2}
  = \frac2p \bigl\| \varepsilon + M_\infty^* \bigr\|_{\L_p}^{p/2}.
  \end{align*}
  In order to obtain a term depending on the $\L_p$ norm of
  $[M,M]_\infty^{1/2}$ on the left-hand side, we proceed as follows:
  let $q>0$ be defined by the relation $p^{-1}=1/2+q^{-1}$,
  i.e. $q=2p/(2-p)$, and write, using H\"older's inequality,
  \begin{align*}
  \bigl\| [M,M]^{1/2}_\infty \bigr\|_{\L_p}
  &= \bigl\| H_\infty^{-1} H_\infty [M,M]^{1/2}_\infty \bigr\|_{\L_p}
  \leq \bigl\| H_\infty^{-1} \bigr\|_{\L_q}
  \bigl\| H_\infty [M,M]^{1/2}_\infty \bigr\|_{\L_2}\\
  &\leq \frac2p \, \bigl\| H_\infty^{-1} \bigr\|_{\L_q} 
  \bigl\| \varepsilon + M_\infty^* \bigr\|_{\L_p}^{p/2},
  \end{align*}
  where, by the definition of $H$ and elementary computations, $\bigl\|
  H_\infty^{-1} \bigr\|_{\L_q} = \bigl\| \varepsilon + M_\infty^*
  \bigr\|_{\L_p}^{1-p/2}$. We have thus proved the inequality $\bigl\|
  [M,M]^{1/2}_\infty \bigr\|_{\L_p} \leq (2/p)\,\bigl\| \varepsilon +
  M_\infty^* \bigr\|_{\L_p}$, which is valid also for $\varepsilon=0$,
  since $\varepsilon$ is arbitrary. The proof is thus finished.
\end{proof}

\begin{rmk}
  It is possible to give an alternative very short proof of
  \eqref{eq:BDGc} that involves little more than just It\^o's
  formula. In fact, appealing to Lenglart's domination inequality (see
  \cite{Lenglart}), one can show that once either the (lower or upper)
  bound holds in $\L_p$, then it holds in $\L_q$ for all $q \in
  ]0,p[$. In particular, the bounds of Section \ref{sec:1st} are
  enough to prove Theorem \ref{thm:BDG} for continuous local
  martingales (cf. \cite{RevYor} for more detail).  This method,
  however, does not work for general local martingales.
\end{rmk}


\section{Auxiliary results}
\subsection{Calculus for functions of finite variation}
We shall denote the variation of a function $f:\erre_+ \to H$ by
$\int_0^\infty |df|$. Recall that, if $f$ has finite variation and
$f(0)=0$, then $f^*$ is bounded by the variation of $f$ (in fact,
$f^*$ is bounded by the oscillation of $f$, which is in turn bounded
by the variation of $f$).

\bigskip

Let $U$ and $V$ be two $\H$-valued functions with finite
variation. Then the following integration-by-parts formula holds
\[
\ip{U}{V} = U_- \cdot V + V_- \cdot U + \sum (\Delta U,\Delta V).
\]
Since the series in the previous expression can be written as
$(V-V_-)\cdot U$, one also has
\begin{equation}     \label{eq:ibp-fv}
(U,V) = U_- \cdot V + V \cdot U \equiv \int U_-\,dV + \int V\,dU.
\end{equation}
Using the integration-by-parts formula for semimartingales, it is
immediately seen that \eqref{eq:ibp-fv} still holds if only one of
$U$ and $V$ is a process with finite variation and the other one is a
semimartingale (and appropriate measurability conditions are satisfied).

Calculus rules for functions (and processes) of finite variation may
differ substantially from the ``classical'' calculus rules for
continuous functions. In this section we collect some elementary
identities that will be needed in the sequel. In particular, if $U
\equiv V$, \eqref{eq:ibp-fv} yields
\begin{equation}    \label{eq:fv-r1}
  dU^2 = (U_- + U)\,dU,  
\end{equation}
which also implies, if $U>0$,
\begin{equation}     \label{eq:fv-r2}
  d(U^{1/2}) = \frac{1}{U_-^{1/2} + U_{\phantom{-}}^{1/2}}\,dU.  
\end{equation}
Assume now $U \geq \delta$ for some $\delta>0$, and set
$V=1/U$. Then \eqref{eq:ibp-fv} yields
\[
1 = U/U = -\int U_-\,d(-1/U) + \int (1/U)\,dU,
\]
or equivalently
\begin{equation}     \label{eq:fv-r3}
d(-1/U) = \frac{1}{UU_-}\,dU.  
\end{equation}

\begin{lemma}     \label{lm:fv1}
  Let $V$ be an increasing function with $V_0=0$. Then one has
  \[
  \int_0^t V_{s-}\,d(V_s^{q-1}) \leq \frac{q-1}{q} V_t^q \qquad
  \forall q \in ]{1,\infty}[
  \]
  and
  \[
  \int_0^t V_{s-} \,d(-V_s^{q-1}) \leq \frac{1-q}{q} V_t^q \qquad
  \forall q \in ]{0,1}[.
  \]
\end{lemma}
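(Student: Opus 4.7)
The plan is to apply the integration-by-parts formula \eqref{eq:ibp-fv} to the product $V \cdot V^{q-1}$ (after a suitable regularization in the second case), and to bound the ``complementary'' integral that appears by means of the convexity (resp.\ concavity) of $x \mapsto x^q$.

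For $q \in \left]1,\infty\right[$, the function $V^{q-1}$ is itself increasing and of finite variation, so \eqref{eq:ibp-fv} gives
\[
V_t^q \;=\; \int_0^t V_{s-}\,d(V_s^{q-1}) \;+\; \int_0^t V_s^{q-1}\,dV_s.
\]
It then suffices to show $\int_0^t V_s^{q-1}\,dV_s \geq V_t^q/q$, as rearranging immediately yields the claimed estimate. This lower bound follows from the chain rule for c\`adl\`ag finite-variation functions, $V_t^q = q\int_0^t V_{s-}^{q-1}\,dV^c_s + \sum_{s\leq t}(V_s^q - V_{s-}^q)$, combined with the convexity inequality $V_s^q - V_{s-}^q \leq q V_s^{q-1}\Delta V_s$ and the fact that $V_{s-}^{q-1}\,dV^c_s = V_s^{q-1}\,dV^c_s$ since the continuous part does not charge jump times.

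For $q \in \left]0,1\right[$, the function $V^{q-1}$ may be singular wherever $V=0$, so I would first regularize by replacing $V$ with $V + \varepsilon$ for $\varepsilon > 0$. Integration by parts then reads
\[
(V_t+\varepsilon)^q \;=\; \varepsilon^q + \int_0^t (V_{s-}+\varepsilon)\,d(V_s+\varepsilon)^{q-1} + \int_0^t (V_s+\varepsilon)^{q-1}\,dV_s,
\]
and the concavity inequality $(V_s+\varepsilon)^q - (V_{s-}+\varepsilon)^q \geq q(V_s+\varepsilon)^{q-1}\Delta V_s$ combined with the same chain rule gives $\int_0^t (V_s+\varepsilon)^{q-1}\,dV_s \leq \bigl[(V_t+\varepsilon)^q - \varepsilon^q\bigr]/q$. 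Substituting back and using $V_{s-} \leq V_{s-}+\varepsilon$ produces
\[
\int_0^t V_{s-}\,d\bigl(-(V_s+\varepsilon)^{q-1}\bigr) \;\leq\; \frac{1-q}{q}\bigl[(V_t+\varepsilon)^q - \varepsilon^q\bigr].
\]

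The main obstacle I anticipate is the limit passage $\varepsilon \searrow 0$ on the left-hand side. The increasing functions $s \mapsto -(V_s+\varepsilon)^{q-1}$ decrease pointwise to $-V_s^{q-1}$, which diverges to $-\infty$ wherever $V_s=0$, so the total mass of the corresponding Stieltjes measures blows up. However, the integrand $V_{s-}$ vanishes precisely on the set where $V_s = 0$, so restricting integration to $\{V_{s-}>0\}$ reduces the problem to a bounded-convergence situation, and a standard monotone (or dominated) convergence argument should complete the proof.
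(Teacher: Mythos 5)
Your proof is correct, but it follows a genuinely different route from the paper's. The paper never invokes integration by parts or a chain rule here: it writes $\int_0^t V_{s-}\,d(V_s^{q-1})$ as a limit of Riemann--Stieltjes sums $\sum_i V_{t_i}\bigl(V_{t_{i+1}}^{q-1}-V_{t_i}^{q-1}\bigr)$ and reduces the whole lemma to the two-point inequality $x_1(x_2^{q-1}-x_1^{q-1})\leq\frac{q-1}{q}(x_2^q-x_1^q)$ for $0\leq x_1\leq x_2$ (and its reversed analogue for $q\in\,]0,1[$), which then telescopes. You instead combine \eqref{eq:ibp-fv} with the change-of-variables formula for c\`adl\`ag finite-variation functions and push the convexity (resp.\ concavity) of $x\mapsto x^q$ into the jump terms; note that the inequality $V_s^q-V_{s-}^q\leq qV_s^{q-1}\Delta V_s$ you use is exactly the paper's two-point inequality in disguise. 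What your packaging buys is transparency --- the lemma becomes precisely the statement that the complementary integral $\int_0^t V_s^{q-1}\,dV_s$ in the integration-by-parts identity is at least $V_t^q/q$ --- at the cost of importing the change-of-variables formula for FV functions, which the paper does not state (it is a special case of the It\^o formula cited from \cite{Met}, so this is harmless). Concerning the limit passage for $q\in\,]0,1[$, which you rightly flag as the only delicate point: your sketch is sound, and the cleanest way to finish it is to observe that the measures $d\bigl(-(V+\varepsilon)^{q-1}\bigr)$ increase setwise as $\varepsilon\downarrow 0$, since for $0\leq x\leq y$ the increment $(x+\varepsilon)^{q-1}-(y+\varepsilon)^{q-1}$ is decreasing in $\varepsilon$; monotone convergence then applies directly, with the contribution of $\{V_{s-}=0\}$ vanishing exactly as you say. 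The paper treats this same point with comparable informality (``a simple regularization \dots\ would suffice''), and in the two places where the lemma is applied (Propositions \ref{prop:clb2-} and \ref{prop:cub2-}) one has $V\geq\varepsilon>0$ anyway, so the degeneracy never actually arises.
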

\begin{proof}
  Let $0=t_0 < t_1 < \cdots < t_n=t$ be a finite partition of
  $[0,t]$. For $q \geq 1$, one has
  \[
  \int_0^t V_{s-} \,d(V_s^{q-1}) =
  \lim_{n\to\infty} \sum_{i=0}^{n-1} V_{t_i} \bigl( V_{t_{i+1}}^{q-1} -
  V_{t_i}^{q-1} \bigr),
  \]
  hence the first inequality is proved if we can show that
  \[
  x_1 \bigl( x_2^{q-1}-x_1^{q-1} \bigr) \leq \frac{q-1}{q} \bigl( x_2^q-x_1^q
  \bigr) \qquad \forall \, 0 \leq x_1 \leq x_2.
  \]
  In fact, by the fundamental theorem of calculus, one has
  $x_2^q-x_1^q \leq q(x_2-x_1)x_2^{q-1}$, which implies, after a few
  elementary computations,
  \[
  x_2^q \geq -\frac{1}{q-1}x_1^q + \frac{q}{q-1}x_1x_2^{q-1},
  \]
  hence also
  \[
  \frac{q-1}{q} \bigl( x_2^q-x_1^q \bigr) \geq x_1 \bigl( 
  x_2^{q-1}-x_1^{q-1} \bigr).
  \]
  Let us now turn to the case $0<q<1$. Note that, in principle, we
  cannot write
  \[
  \int_0^t V_{s-} \,d(-V_s^{q-1}) =
  \lim_{n\to\infty} \sum_{i=0}^{n-1} V_{t_i} \bigl( V_{t_i}^{q-1} -
  V_{t_{i+1}}^{q-1} \bigr)
  \]
  because $V_0=0$. However, a simple regularization of the type
  $V_0=\varepsilon>0$ and then passing to the limit as $\varepsilon
  \to 0$ at the end of the computations would suffice. Hence we can
  proceed in a slightly formal (but harmless) way accepting the
  previous identity as true, and, in analogy to the case $q>1$, it is
  enough to show that
  \[
  V_{t_i} \bigl( V_{t_i}^{q-1} - V_{t_{i+1}}^{q-1} \bigr) \leq
  \frac{1-q}{q} \bigl( V_{t_{i+1}}^q - V_{t_i}^q \bigr)
  \qquad \forall i \in \{0,1,\ldots,n-1\}.
  \]
  The latter inequality certainly holds true if one has
  \[
  x_2^q - x_1^q \geq \frac{q}{1-q} x_1(x_1^{q-1}-x_2^{q-1}) 
  \qquad \forall 0 \leq x_1 \leq x_2.
  \]
  Let $0 \leq x_1 \leq x_2$. Since $x \mapsto x^{q-1}$ is decreasing,
  the fundamental theorem of calculus yields
  \[
  x_2^q - x_1^q = q\int_{x_1}^{x_2} y^{q-1}\,dy \geq q x_2^q - q x_1
  x_2^{q-1}.
  \]
  Rearranging terms, this implies
  \[
  x_2^q \geq \frac{1}{1-q}x_1^q - \frac{q}{1-q} x_1 x_2^{q-1},
  \]
  hence also
  \[
  x_2^q - x_1^q \geq \frac{q}{1-q} \, \bigl( x_1^q - x_1x_2^{q-1} \bigr),
  \]
  which is the desired inequality.
\end{proof}

We shall also need some $\L_p$ estimates for compensators of processes
with integrable variation.
\begin{prop}     \label{prop:BDG-fv}
  Let $V$ be a real increasing process with compensator
  $\tilde{V}$. Then
  \[
  \left\| \int_0^\infty |d\tilde{V}| \, \right\|_{\L_p} \leq p \left\|
    \int_0^\infty |dV| \, \right\|_{\L_p} \qquad \forall 1 \leq p <
  \infty.
  \]
\end{prop}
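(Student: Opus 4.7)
The plan is to reduce the inequality to $\|\tilde V_\infty\|_{\L_p}\le p\|V_\infty\|_{\L_p}$ (since $V_0=\tilde V_0=0$ and both processes are increasing, their total variations coincide with $V_\infty$ and $\tilde V_\infty$), to dispose of $p=1$ via the identity $\E\tilde V_\infty=\E V_\infty$, and for $p>1$ to combine the integration-by-parts identity \eqref{eq:ibp-fv} with Lemma~\ref{lm:fv1} to produce a pointwise power bound, then invoke the defining property of the compensator together with H\"older's inequality.

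First I would localize by stopping at $T_n:=\inf\{t:V_t+\tilde V_t>n\}$, under which $V^{T_n}$ and $\tilde V^{T_n}$ are bounded; at the end, monotone convergence as $n\to\infty$ will recover the general case. Applying \eqref{eq:ibp-fv} to the two predictable finite-variation processes $\tilde V$ and $\tilde V^{p-1}$ yields
\[
\tilde V_t^p=\int_0^t\tilde V_{s-}\,d\tilde V_s^{p-1}+\int_0^t\tilde V_s^{p-1}\,d\tilde V_s,
\]
and Lemma~\ref{lm:fv1} (with $q=p$) bounds the first integral by $\tfrac{p-1}{p}\tilde V_t^p$. Rearranging gives the pathwise estimate
\[
\tilde V_\infty^p\le p\int_0^\infty\tilde V_s^{p-1}\,d\tilde V_s.
\]

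Since $\tilde V$ is predictable, so is the integrand $\tilde V^{p-1}$, and the defining property of the compensator allows one to write
\[
\E\int_0^\infty\tilde V_s^{p-1}\,d\tilde V_s=\E\int_0^\infty\tilde V_s^{p-1}\,dV_s\le\E\bigl[\tilde V_\infty^{p-1}V_\infty\bigr],
\]
where the last inequality follows from $\tilde V_s^{p-1}\le\tilde V_\infty^{p-1}$. H\"older's inequality with conjugate exponents $p/(p-1)$ and $p$ then gives
\[
\E\bigl[\tilde V_\infty^{p-1}V_\infty\bigr]\le\|\tilde V_\infty\|_{\L_p}^{p-1}\|V_\infty\|_{\L_p},
\]
so that
\[
\|\tilde V_\infty\|_{\L_p}^p\le p\|\tilde V_\infty\|_{\L_p}^{p-1}\|V_\infty\|_{\L_p},
\]
and the conclusion follows on dividing by $\|\tilde V_\infty\|_{\L_p}^{p-1}$.

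The main obstacle is ensuring that $\|\tilde V_\infty\|_{\L_p}^{p-1}$ is finite so that the final division step is legitimate; this is exactly what the preliminary localization is designed to handle, and once it is in place the remainder of the argument is the four-line algebraic computation above.
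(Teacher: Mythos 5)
Your argument is correct in substance but proceeds along a genuinely different route from the paper. The paper proves the estimate by duality: it writes $\norm[\big]{\tilde V_\infty}_{\L_p}$ as a supremum of $\E\,\xi\tilde V_\infty$ over the unit ball of $\L_q$, introduces the bounded martingale $N_t=\E[\xi\,|\,\mathcal F_t]$, uses integration by parts and the compensator identity to reduce to $\E(N_-\cdot V)_\infty\le \E\,N^*_\infty V_\infty$, and concludes with Doob's inequality in $\L_q$ (which is where the constant $p=q/(q-1)$ comes from). Your proof is instead the classical Garsia--Neveu-type argument: the pathwise bound $\tilde V_\infty^p\le p\int_0^\infty \tilde V_s^{p-1}\,d\tilde V_s$ from \eqref{eq:ibp-fv} and Lemma~\ref{lm:fv1}, the compensator identity applied to the predictable integrand $\tilde V^{p-1}$, and H\"older. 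This is precisely the style of proof the introduction attributes to \cite{DM-mg,kall,LeLePr} and that the authors deliberately avoid. Both give the constant $p$; the duality proof is shorter and, importantly, never needs to know in advance that $\norm[\big]{\tilde V_\infty}_{\L_p}<\infty$ (the supremum characterization of the norm is valid even when the norm is infinite), whereas your proof must justify the final division. Your route has the compensating advantage of being entirely pathwise up to the last two steps and of not invoking Doob's inequality.

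The one step that does not work as written is the localization. With $T_n:=\inf\{t:V_t+\tilde V_t>n\}$ the stopped processes are \emph{not} bounded: $\tilde V_{T_n}=\tilde V_{T_n-}+\Delta\tilde V_{T_n}\le n+\Delta\tilde V_{T_n}$, and the jump of the compensator at $T_n$ is only controlled by a conditional expectation of a jump of $V$, not by a constant; so the finiteness of $\norm[\big]{\tilde V_{T_n}}_{\L_p}$ still requires an argument. A cleaner fix that stays entirely within your scheme is to truncate the compensator itself rather than stop it: since $\tilde V\wedge n$ is predictable, increasing and bounded, and $d(\tilde V\wedge n)\le d\tilde V$ as measures, Lemma~\ref{lm:fv1} gives $(\tilde V_\infty\wedge n)^p\le p\int_0^\infty(\tilde V_s\wedge n)^{p-1}\,d\tilde V_s$, the compensator identity and H\"older then yield $\norm[\big]{\tilde V_\infty\wedge n}_{\L_p}^p\le p\,\norm[\big]{\tilde V_\infty\wedge n}_{\L_p}^{p-1}\norm[\big]{V_\infty}_{\L_p}$ with all quantities finite, and one divides and lets $n\to\infty$ by monotone convergence. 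With this modification your proof is complete.
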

\begin{proof}
  Since $V$ is increasing, then $\tilde{V}$ is also
  increasing\footnote{An overkill proof of this fact is that $V$ is a
    submartingale, and $V=(V-\tilde{V})+\tilde{V}$ is its Doob-Meyer
    decomposition.}, hence we only have to prove
  $\norm[\big]{\tilde{V}_\infty}_{\L_p} \leq p
  \norm[\big]{V_\infty}_{\L_p}$.  We have
  \[
  \norm[\big]{\tilde{V}_\infty}_{\L_p} = \sup_{\xi \in B_1(\L_q)}
  \E\xi\tilde{V}_\infty,
  \]
  where $q$ is the conjugate exponent of $p$ and $B_1(\L_q)$ stands
  for the unit ball of $\L_q$. Let $\xi \in B_1(\L_q)$ be arbitrary
  but fixed, and introduce the martingale $N$ defined by
  $N_t:=\E[\xi|\mathcal{F}_t]$, $t \geq 0$, $N_\infty:=\xi$. The
  integration-by-parts formula yields
  \[
  \xi \tilde{V}_\infty = N_\infty \tilde{V}_\infty = (\tilde{V} \cdot N)_\infty
  + (N_- \cdot \tilde{V})_\infty,
  \]
  hence, using the definition of compensator, the fact that $V$ is
  increasing, and H\"older's inequality, one obtains
  \[
  \E \xi \tilde{V}_\infty = \E( N_- \cdot V)_\infty \leq 
  \E N^*_\infty V_\infty \leq 
  \norm[\big]{N^*_\infty}_{\L_q} \norm[\big]{V_\infty}_{\L_p}.
  \]
  Since, by Doob's inequality, one has
  \[
  \norm[\big]{N^*_\infty}_{\L_q} \leq p \, \norm[\big]{N_\infty}_{\L_q} =
  p \, \norm[\big]{\xi}_{\L_q} \leq p,
  \]
  the conclusion follows because $\xi$ is arbitrary.
\end{proof}

The following proposition extends, in the case $p=1$, the previous
inequality to Hilbert-space-valued processes.
\begin{prop}     \label{prop:veme}
  Let $X: \erre_+ \to H$ be a right-continuous measurable process such
  that $\E\int |dX|<\infty$. Then $X$ admits a dual predictable
  projection (compensator) $\tilde{X}$, which satisfies
  \begin{equation}     \label{eq:dincu}
  \E\int_0^\infty |d\tilde{X}| \leq \E\int_0^\infty |dX|.
  \end{equation}
\end{prop}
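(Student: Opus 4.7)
The plan is to first construct $\tilde{X}$ by reducing to the real-valued case via an orthonormal basis of $\H$, and then to establish the variation bound by a duality argument that converts it into the defining scalar identity for compensators, applied to a near-optimal predictable $\H$-valued test process.

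\emph{Existence.} Fix an orthonormal basis $(e_k)_{k\in\enne}$ of $\H$ and set $X^k := (X,e_k)$. Each $X^k$ is a real right-continuous process with $\int_0^\infty |dX^k| \leq \int_0^\infty |dX| \in \L_1$, so the classical scalar theory produces a real dual predictable projection $Y^k$, satisfying $\E\int_0^\infty |dY^k| \leq \E\int_0^\infty |dX^k|$ (the $p=1$ case of Proposition~\ref{prop:BDG-fv}, which is in fact trivial since both sides equal $\E Y^k_\infty = \E X^k_\infty$ when $X^k$ is increasing, and the general case reduces to this by writing $X^k$ as the difference of its positive and negative variations). A Riesz-type argument, applied to the linear and uniformly bounded map $h\mapsto\widetilde{(X,h)}$ on $\H$, then yields a predictable $\H$-valued process $\tilde{X}$ with $(\tilde{X},h)=\widetilde{(X,h)}$ for every $h\in\H$, which is the desired compensator.

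\emph{Variation bound.} Exploit the duality representation
\[
\int_0^\infty |d\tilde{X}| \;=\; \sup_{\Phi} \int_0^\infty (\Phi_s, d\tilde{X}_s),
\]
where the supremum ranges over all bounded predictable $\H$-valued step processes with $\|\Phi_s\|\leq 1$. Separability of $\H$ reduces this to a countable supremum, and a measurable selection then yields, for every $\varepsilon>0$, a predictable $\Phi^\varepsilon$ with $\|\Phi^\varepsilon\|\leq 1$ such that $\int_0^\infty (\Phi^\varepsilon_s, d\tilde{X}_s) \geq \int_0^\infty |d\tilde{X}| - \varepsilon$ a.s. Applying the defining identity of the compensator (which extends from the scalar case to $\H$-valued predictable integrands by decomposing along $(e_k)$) and taking expectations gives
\[
\E\int_0^\infty |d\tilde{X}| - \varepsilon
\;\leq\; \E\int_0^\infty (\Phi^\varepsilon_s, d\tilde{X}_s)
\;=\; \E\int_0^\infty (\Phi^\varepsilon_s, dX_s)
\;\leq\; \E\int_0^\infty \|\Phi^\varepsilon_s\|\,|dX|
\;\leq\; \E\int_0^\infty |dX|.
\]
Letting $\varepsilon\to 0$ concludes the proof.

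\emph{Main obstacle.} The principal technical point is ensuring that the near-optimal $\Phi^\varepsilon$ can be chosen \emph{predictable} while pathwise approximating $\int |d\tilde{X}|$; this combines the countable-supremum reduction (from separability of $\H$) with a measurable selection argument. Once this bookkeeping is in place the estimate follows immediately from the scalar defining property of $\tilde{X}$, and no additional probabilistic input is needed beyond what is already invoked in Proposition~\ref{prop:BDG-fv}.
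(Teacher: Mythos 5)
Your duality argument for the bound \eqref{eq:dincu} is essentially sound and genuinely different from the paper's: the paper works at the level of the $\H$-valued Stieltjes measure $\mu_X$ on $\mathcal{B}(\erre_+)\otimes\mathcal{F}$, invoking Dinculeanu's inequality $|\mu_X^p|\leq|\mu_X|^p$ together with the fact that predictable projections of bounded processes are bounded, whereas you test $d\tilde{X}$ against predictable unit-norm integrands and use the defining identity of the compensator. Your step can in fact be cleaned up so that no $\varepsilon$ and no measurable selection are needed: since $\tilde{X}$ is predictable with finite variation, it admits a polar decomposition $d\tilde{X}=\psi\,|d\tilde{X}|$ with $\psi$ \emph{predictable} and $\|\psi\|=1$, and taking $\Phi=\psi$ gives $\E\int|d\tilde{X}|=\E\int(\psi,d\tilde{X})=\E\int(\psi,dX)\leq\E\int|dX|$ directly.

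The genuine gap is in the existence step. You assert that $h\mapsto\widetilde{(X,h)}$ is a ``linear and uniformly bounded'' map and invoke a Riesz-type argument, but the only quantitative input you provide is the coordinatewise bound $\E\int|dY^k|\leq\E\int|dX^k|$, and this is not enough: these are $\L_1$ bounds, one for each $k$, and $\sum_k\E\int|dX^k|$ can be infinite even when $\E\int|dX|<\infty$ (the $\H$-valued variation controls the $\ell_2$ norm of increments, not the $\ell_1$ sum of coordinate variations). In particular nothing you have written shows that $\sum_k(Y^k_t)^2<\infty$ a.s., i.e.\ that the candidate $\tilde{X}_t=\sum_k Y^k_t e_k$ lands in $\H$ at all, let alone that it has integrable variation. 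This is precisely the point where the infinite-dimensional statement is nontrivial and where the paper falls back on Dinculeanu's theorem. The fix is to produce a single dominating process: let $A$ be the compensator of the real increasing process $V_t:=\int_0^t|dX|$. For every $h$ with $\|h\|\leq 1$ the variation measure of $(X,h)$ is dominated by $dV$, hence the variation measure of $\widetilde{(X,h)}$ is dominated by $dA$; running this simultaneously over a countable dense subset of the unit ball gives, a.s., $|\widetilde{(X,h)}_t-\widetilde{(X,h)}_s|\leq A_t-A_s$ for all such $h$ and all $s\leq t$. This is the uniform boundedness your Riesz argument needs, and it yields $\|\tilde{X}_t-\tilde{X}_s\|\leq A_t-A_s$, whence $\E\int|d\tilde{X}|\leq\E A_\infty=\E V_\infty=\E\int|dX|$ --- so once the existence step is done properly, the variation bound comes for free and the duality argument becomes superfluous.
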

\begin{proof}
  The existence of $\tilde{X}$, as a process with integrable variation,
  follows by a result due to Dinculeanu (see \cite[Thm.~22.8,
  p.~278]{Dinc}).
  Let $\mathcal{M} := \mathcal{B}(\erre_+) \otimes \mathcal{F}$.  By
  \cite[Thm.~19.8, p.~220]{Dinc}, there exists a $\sigma$-additive
  measure $\mu_X: \mathcal{M} \to H$ with finite variation
  $|\mu_X|$ such that
  \[
  \mu_X(A) = \E\int_0^\infty 1_A \,dX \qquad \forall A \in \mathcal{M}
  \]
  and
  \[
  |\mu_X|(A) = \E\int_0^\infty 1_A \,|dX| \qquad \forall A \in \mathcal{M}.
  \]
  The dual predictable projection $\mu_X^p$ of $\mu_X$ is defined by
  \[
  \mu_X^p: \mathcal{M} \ni A \mapsto \int_{\erre_+ \times \Omega}
  \prescript{p}{}{1}_A\,d\mu_X,
  \]
  where $\prescript{p}{}{Y}$ denotes the predictable projection of a
  measurable process $Y$.
The dual predictable projection
  (compensator) $\tilde{X}$ is constructed as the unique process
  associated to the measure $\mu^p_X =: \mu_{\tilde{X}}$.  We can thus
  write
  \[
  \E\int_0^\infty |d\tilde{X}| 
  = \int_{\erre_+ \times \Omega} 1\,d|\mu_{\tilde{X}}|
  = \int_{\erre_+ \times \Omega} 1\,d|\mu^p_X|,
\]
where, by \cite[Thm.~22.1, p.~272]{Dinc}, $|\mu^p_X| \leq |\mu_X|^p$.
 Taking into account that, if there exists a
  constant $N$ such that $|Y| \leq N$ then $|\prescript{p}{}{Y}| \leq
  N$ outside an evanescent set (see e.g. \cite[\S{VI.43}]{DM-mg}), one
  has, by the definition of dual predictable projection of a measure,
\[
\E\int_0^\infty |d\tilde{X}| 
  \leq \int_{\erre_+ \times \Omega} 1\,d|\mu_X|^p
\leq \int_{\erre_+ \times \Omega} 1\,d|\mu_X| = \E\int_0^\infty |dX|.
\qedhere
\]
\end{proof}

\subsection{An extension of the Riesz-Thorin interpolation
theorem}     \label{ssec:RT}
We quote, omitting the proof, a generalization of the Riesz-Thorin
interpolation theorem, dealing with $L_p$ spaces with mixed norm.

Let $(X_1,\mu_1)$, $(X_2,\mu_2),\ldots,(X_n,\mu_n)$ be measure spaces,
and $p_1,p_2,\ldots,p_n \in [1,\infty]$. Setting
$\mathbf{p}=(p_1,p_2,\ldots,p_n)$ and
$1/\mathbf{p}:=(1/p_1,1/p_2,\ldots,1/p_n)$ for convenience of
notation, let us define the following spaces of integrable functions
with mixed norm:
\[
L_{\mathbf{p}} := L_{p_1,\ldots,p_n} 
:= L_{p_1}(X_1 \to L_{p_2,\ldots,p_n},\mu_1),
\quad \ldots \quad
,L_{p_n}:=L_{p_n}(X_n,\mu_n).
\]
The following result is due to Benedek and Panzone \cite[p.~316]{BenPan}.
\begin{thm}     \label{thm:interp}
  Let $T$ be a linear operator such that
  \[
  \norm[\big]{T}_{L_{\mathbf{p}_0} \to L_{\mathbf{q}_0}} \leq M_0, \qquad
  \norm[\big]{T}_{L_{\mathbf{p}_1} \to L_{\mathbf{q}_1}} \leq M_1,
  \]
  with $\mathbf{p}_0, \mathbf{p}_1, \mathbf{q}_0, \mathbf{q}_1 \in \,
  [1,\infty]^n$.
  Let $\theta \in \, ]{0,1}[$ and define $\mathbf{p}$, $\mathbf{q}$
  through
  \[
  \frac{\theta}{\mathbf{p}_1} + \frac{1-\theta}{\mathbf{p}_0} =
  \frac{1}{\mathbf{p}}, \qquad \frac{\theta}{\mathbf{q}_1} +
  \frac{1-\theta}{\mathbf{q}_0} = \frac{1}{\mathbf{q}}.
  \]
  Then one has
  \[
  \norm[\big]{T}_{L_{\mathbf{p}} \to L_{\mathbf{q}}} \leq M_0^{1-\theta} M_1^\theta.
  \]
\end{thm}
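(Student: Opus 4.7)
The plan is to adapt the complex-interpolation proof of the classical Riesz-Thorin theorem. By density of simple functions in all $L_{\mathbf{p}}$-spaces with finite exponents, it suffices to establish the bound on simple functions $f$; by duality (valid provided $q_i<\infty$), this further reduces to bounding the bilinear pairing
\[
\Phi(f,g) := \int (Tf) \cdot g\,d\mu
\]
for simple $f$ in the unit ball of $L_{\mathbf{p}}$ and simple $g$ in the unit ball of $L_{\mathbf{q}'}$, where $\mathbf{q}'$ denotes the coordinate-wise conjugate of $\mathbf{q}$.

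The core construction is an analytic family $(f_z,g_z)_{z\in S}$, $S:=\{0\leq\Re z\leq 1\}$, interpolating between boundary data that lie in the unit balls of $L_{\mathbf{p}_0},L_{\mathbf{q}_0'}$ on $\Re z=0$ and of $L_{\mathbf{p}_1},L_{\mathbf{q}_1'}$ on $\Re z=1$, with $f_\theta=f$, $g_\theta=g$. In the single-exponent case one sets $f_z=|f|^{p/p(z)}\,\mathrm{sgn}(f)$ with $1/p(z):=(1-z)/p_0+z/p_1$; for mixed norms this is applied layer by layer. Writing $f$ as a finite sum of complex multiples of product indicators of measurable rectangles $A^{(1)}\times\cdots\times A^{(n)}$, one computes the innermost $L_{p_n(z)}$ slice-norm as a function of $(x_1,\ldots,x_{n-1})$, rescales the corresponding slab by a complex power so that the slice sits in the unit ball on both boundary lines, and then iterates outward through the coordinates. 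Because $f$ and $g$ are simple, every slice-norm is itself a simple function of the remaining variables, so the whole construction produces an $f_z(x_1,\ldots,x_n)$ that is simple in $(x_1,\ldots,x_n)$ and entire in $z$ (a finite exponential sum).

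The map $z\mapsto\Phi(f_z,g_z)$ is then continuous and bounded on $S$ and holomorphic in its interior, and the hypotheses give $|\Phi(f_{j+it},g_{j+it})|\leq M_j$ on $\Re z=j$ for $j=0,1$ and all $t\in\erre$. The Hadamard three-lines lemma then yields
\[
|\Phi(f_\theta,g_\theta)| \leq M_0^{1-\theta}\,M_1^\theta,
\]
and taking the supremum over $g$ gives $\norm{Tf}_{L_{\mathbf{q}}}\leq M_0^{1-\theta}M_1^\theta\norm{f}_{L_{\mathbf{p}}}$, which is the desired bound.

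The principal obstacle is engineering the layered analytic family so that each slab-level renormalization really places the corresponding slice in the correct unit ball on $\Re z\in\{0,1\}$, while keeping $f_z$ jointly measurable in $x$ and entire in $z$: each layer must be rescaled by a power that depends on the outer slice-norms, and one has to verify by a direct calculation that the iterated boundary $L_{\mathbf{p}_j}$-norm of $f_{j+it}$ equals $\norm{f}_{L_{\mathbf{p}}}^{\text{(something)}}$ and is bounded by $1$. Endpoint situations where some $p_i$ or $q_i$ equals $\infty$ (so coordinate-wise duality fails) would be treated by Stein's trick of multiplying by $e^{\delta z^2}$ to obtain decay as $|\Im z|\to\infty$ and then letting $\delta\downarrow 0$, or by a separate approximation argument on those coordinates.
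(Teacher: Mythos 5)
The paper states this theorem \emph{without proof}, quoting it from Benedek and Panzone, so there is no internal argument to compare against; your proposal in fact reproduces the route taken in the cited source, namely Thorin's complex-variable method with the analytic family built layer by layer through the coordinates. The strategy is sound, and the step you flag as the principal obstacle is real but routine. For simple $f$ with $\norm{f}_{L_{\mathbf{p}}}=1$, let $\phi_k(x_1,\dots,x_k)$ denote the $L_{p_{k+1},\dots,p_n}$-norm of $f(x_1,\dots,x_k,\cdot)$ in the last $n-k$ variables (so $\phi_n=|f|$ and $\phi_0=1$), and set
\[
f_z:=\mathrm{sgn}(f)\,\prod_{k=1}^n\Bigl(\frac{\phi_k}{\phi_{k-1}}\Bigr)^{p_k/p_k(z)},
\qquad \frac{1}{p_k(z)}:=\frac{1-z}{p_{0,k}}+\frac{z}{p_{1,k}},
\]
where $p_{j,k}$ is the $k$-th component of $\mathbf{p}_j$. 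Then $f_\theta=f$, and on the line $\Re z=j$ the modulus of the $k$-th factor is $(\phi_k/\phi_{k-1})^{p_k/p_{j,k}}$; integrating $|f_z|^{p_{j,n}}$ in $x_n$ turns the $k=n$ factor into $1$ (since $\int\phi_n^{p_n}\,dx_n=\phi_{n-1}^{p_n}$) while leaving the outer factors untouched, and the iterated norms telescope outward to $\phi_0=1$. The analogous family on the dual side handles $g$. Because $f$ is simple, each ratio $\phi_k/\phi_{k-1}$ takes finitely many positive values on $\{f\neq 0\}$, so $f_z$ is a finite exponential sum, entire and bounded on the strip, and the three-lines lemma applies directly; the $e^{\delta z^2}$ regularization you mention is not needed in this simple-function setting. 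The only places requiring genuine extra care are the endpoint exponents ($p_{j,k}=\infty$, or $q_{j,k}=1$ on the dual side), where a layer degenerates to a supremum and density of simple functions fails; you correctly earmark these for a separate limiting argument, and they are needed for the application in the paper (the space $L_{p,\infty,2}$ in the proof of Stein's theorem), so that case cannot be waved away entirely.
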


\medskip

Let $(X_1,\mu_1)$, $(X_2,\mu_2)$ be two measure spaces, and $H$ a real separable Hilbert space. 
We shall say that a map $T:L_0(X_1 \to H,\mu_1) \to L_0(X_2,\mu_2)$ is \emph{sublinear}
if
\begin{itemize}
\item[(a)] $|T(f_1+f_2)| \leq |Tf_1| + |Tf_2|$ for all $f_1$, $f_2 \in
  L_0(X_1 \to H,\mu_1)$;
\item[(b)] $|T(\alpha f)| = |\alpha| |Tf|$ for all $\alpha \in \erre$
  and $f \in L_0(X_1 \to H,\mu_1)$.
\end{itemize}
The following result can be deduced by the previous theorem identifying $H$ with $\ell_2$, and by using the linearization method of \cite{Jans:interp} (cf. also \cite{CZ:interp}) to cover the case of sublinear operators.
\begin{thm}     \label{thm:CZ}
  Let $T$ be a sublinear operator such that
  \[
  \norm[\big]{T}_{L_{p_0}(X_1 \to H) \to L_{q_0}(X_2)} \leq M_0, \qquad
  \norm[\big]{T}_{L_{p_1}(X_1 \to H) \to L_{q_1}(X_2)} \leq M_1,
  \]
  with $p_0$, $p_1$, $q_0$, $q_1 \in \, [1,\infty]$.
  Let $\theta \in \, ]{0,1}[$ and define $p$, $q$
  through
  \[
  \frac{\theta}{p_1} + \frac{1-\theta}{p_0} =
  \frac{1}{p}, \qquad \frac{\theta}{q_1} +
  \frac{1-\theta}{q_0} = \frac{1}{q}.
  \]
  Then one has
  \[
  \norm[\big]{T}_{L_p(X_1 \to H) \to L_q(X_2)} \leq M_0^{1-\theta} M_1^\theta.
  \]
\end{thm}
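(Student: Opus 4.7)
The plan is to reduce Theorem \ref{thm:CZ} to the mixed-norm linear result (Theorem \ref{thm:interp}) in two moves. First, fix an orthonormal basis $(e_k)_{k \in \enne}$ of $H$ and consider the canonical isometry $\mathcal{I}: L_p(X_1 \to H, \mu_1) \to L_{p,2}(X_1 \times \enne, \mu_1 \otimes \text{counting})$ given by $\mathcal{I}f = ((f(\cdot), e_k)_H)_{k \in \enne}$. Indeed, by definition
\[
\norm[\big]{f}_{L_p(X_1 \to H)}
= \bigg( \int_{X_1} \Big( \sum_k |(f(x),e_k)_H|^2 \Big)^{p/2} d\mu_1 \bigg)^{1/p}
\]
is exactly the Benedek--Panzone mixed norm $L_{p_1,p_2}$ with $p_1 = p$ on $X_1$ and $p_2 = 2$ on $\enne$. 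Transporting $T$ along $\mathcal{I}^{-1}$, the hypotheses become boundedness between $L_{\mathbf{p}_i} := L_{p_i,2}$ and $L_{\mathbf{q}_i} := L_{q_i}$ (a one-tuple) with norm $M_i$, for $i=0,1$.

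Second, assume momentarily that $T$ is linear. Then Theorem \ref{thm:interp} applies verbatim: the interpolation identities give $\mathbf{q} = q$ and, since $\theta/2 + (1-\theta)/2 = 1/2$, one has $\mathbf{p} = (p, 2)$ on the source side. Thus the Hilbert-valued mixed-norm structure is preserved along the complex interpolation, and the conclusion $\norm[\big]{T}_{L_p(X_1 \to H) \to L_q(X_2)} \leq M_0^{1-\theta} M_1^\theta$ follows after undoing the isometry $\mathcal{I}$.

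Third, to upgrade from the linear to the sublinear case I would invoke the linearization method of \cite{Jans:interp,CZ:interp}: given a sublinear $T$ and any fixed $f$, one constructs a linear operator $S_f$ such that $|S_f g| \leq |Tg|$ pointwise $\mu_2$-a.e.\ for all $g$ in the domain and such that $|S_f f| = |Tf|$ (the construction uses a pointwise Hahn--Banach-type selection and the sublinearity conditions (a), (b) that guarantee $|T(\cdot)(x_2)|$ is a seminorm for each $x_2$). Because $S_f$ is dominated pointwise by $T$, it inherits the endpoint bounds $\norm[\big]{S_f}_{L_{p_i}(X_1 \to H) \to L_{q_i}(X_2)} \leq M_i$; step two then yields $\norm[\big]{S_f f}_{L_q(X_2)} \leq M_0^{1-\theta} M_1^\theta \norm[\big]{f}_{L_p(X_1 \to H)}$, which is what we want since $\|S_f f\|_{L_q} = \|Tf\|_{L_q}$. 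The main obstacle is precisely the measurable selection implicit in the construction of $S_f$ in the Hilbert-valued mixed-norm setting; since this is what the cited linearization machinery is designed to deliver, I would invoke it as a black box rather than reprove it here.
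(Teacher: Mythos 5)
Your proposal is correct and follows exactly the route the paper itself indicates (the paper omits the proof, remarking only that the result follows from Theorem \ref{thm:interp} by identifying $H$ with $\ell_2$ and invoking the linearization method of Janson and Calder\'on--Zygmund for the sublinear case). Your identification of $L_p(X_1 \to H)$ with the mixed-norm space $L_{p,2}$, the observation that the second exponent is preserved under interpolation since $\theta/2 + (1-\theta)/2 = 1/2$, and the deferral of the linearization construction to the cited references all match the intended argument.
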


\subsection{Stein's estimate for predictable projections of
  discrete-time processes}
In the proof of the BDG inequality for Hilbert-space-valued
martingales we shall use a slightly extended version of an $\L_p$
estimate for the quadratic variation of the predictable projection of
an arbitrary discrete-time process, due to Stein (cf. \cite[Thm.~8,
p.~103]{Stein-LP}). For the sake of completeness, we include its
simple and elegant proof, which relies on Theorem \ref{thm:interp}
above.
\begin{thm}[Stein] \label{thm:stein} 
  Let $(\Omega,\mathcal{F},(\mathcal{F}_n)_{n\in\enne},\P)$ be a
  discrete-time stochastic basis and $(f_n)_{n\in\enne}$ an $\H$-valued
  process. For any $p \in ]{1,\infty}[$ and any sequence $(n_k)_{k \in
    \enne}$ of positive integers, denoting conditional expectation
  with respect to $\mathcal{F}_n$ by $\E_n$, one has
  \[
  \bigg\| \Bigl( \sum_k \norm[\big]{\E_{n_k} f_k}^2 \Bigr)^{1/2} 
  \bigg\|_{\L_p} \lesssim_p
  \bigg\| \Bigl( \sum_k \norm[\big]{f_k}^2 \Bigr)^{1/2} \bigg\|_{\L_p},
  \]
  i.e. $\norm[\big]{(\E_{n_k}f_k)_k}_{\L_p(\ell_2(\H))} \lesssim_p
  \norm[\big]{f}_{\L_p(\ell_2(\H))}$.
\end{thm}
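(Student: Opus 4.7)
The plan is to reduce to the scalar case and then follow Stein's classical argument: combine Doob's maximal inequality, $L_2$-self-adjointness of conditional expectations, and the mixed-norm Benedek--Panzone theorem (Theorem~\ref{thm:interp}). First I would reduce to $\H = \erre$: picking an orthonormal basis $(e_i)_i$ of $\H$ and writing $f_k^i := \ip{f_k}{e_i}$, conditional expectation commutes with the basis expansion and Parseval gives $\sum_k \norm{\E_{n_k} f_k}^2 = \sum_{k,i} |\E_{n_k} f_k^i|^2$ as well as $\sum_k \norm{f_k}^2 = \sum_{k,i} |f_k^i|^2$, so the statement reduces to its scalar version on the larger index set $\enne \times \enne$ (with $n_{(k,i)} := n_k$). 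From now on let $T : (f_k)_k \mapsto (\E_{n_k} f_k)_k$ denote the resulting diagonal linear operator on scalar sequences.

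Next I would establish that $T$ is bounded on $\L_p(\ell_\infty)$ for every $p \in \, ]{1,\infty}[$. Setting $g := \sup_j |f_j|$, one has the pointwise chain
\[
\sup_k |\E_{n_k} f_k| \leq \sup_k \E_{n_k}|f_k| \leq \sup_k \E_{n_k} g \leq \sup_n \E_n g,
\]
and Doob's $\L_p$-inequality applied to the positive submartingale $(\E_n g)_n$ yields $\norm{Tf}_{\L_p(\ell_\infty)} \lesssim_p \norm{g}_{\L_p} = \norm{f}_{\L_p(\ell_\infty)}$.

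The identity $\int (\E_n f)g\,d\P = \int f(\E_n g)\,d\P$ then makes $T$ its own adjoint in the $\L_p(\ell_1)$--$\L_{p'}(\ell_\infty)$ duality, so the previous bound implies, for every $p \in \, ]{1,\infty}[$,
\[
\norm{Tf}_{\L_p(\ell_1)} = \sup_{\norm{g}_{\L_{p'}(\ell_\infty)} \leq 1} \biggl|\E \sum_k f_k\,\E_{n_k} g_k\biggr| \leq \norm{f}_{\L_p(\ell_1)} \sup_{\norm{g}_{\L_{p'}(\ell_\infty)} \leq 1} \norm{Tg}_{\L_{p'}(\ell_\infty)} \lesssim_p \norm{f}_{\L_p(\ell_1)}.
\]

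Finally, Theorem~\ref{thm:interp} applied to $T$ with endpoints $\mathbf{p}_0 = \mathbf{q}_0 = (p,\infty)$, $\mathbf{p}_1 = \mathbf{q}_1 = (p,1)$ and parameter $\theta = 1/2$ gives $\mathbf{p} = \mathbf{q} = (p,2)$, hence the desired $\norm{Tf}_{\L_p(\ell_2)} \lesssim_p \norm{f}_{\L_p(\ell_2)}$. The only point in this plan that needs some care is the duality step: one has to check that the componentwise $L_2$-self-adjointness of the conditional expectations $\E_{n_k}$ indeed exhibits $T$ as its own adjoint in the mixed-norm pairing; once that is pinned down, Doob's inequality together with Theorem~\ref{thm:interp} does the rest.
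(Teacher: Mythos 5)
Your argument is correct, and while it draws on the same toolbox as the paper (Doob's maximal inequality, self-adjointness of conditional expectation, and the Benedek--Panzone theorem), it arranges the pieces differently. The paper keeps $\H\simeq\ell_2$ as a fixed third mixed-norm index and interpolates between the endpoints $\L_p(\ell_p(\H))$ (obtained from the conditional Jensen inequality) and $\L_p(\ell_\infty(\H))$ (from Doob); this only reaches $\L_p(\ell_q(\H))$ for $q\geq p$, hence covers $1<p\leq 2$, and the case $p>2$ is then handled by a separate duality argument on $\L_p(\ell_2(\H))$ against $\L_{p'}(\ell_2(\H))$. You instead flatten $\H$ into the sequence index (legitimate, since $\ell_2(\enne;\ell_2)=\ell_2(\enne\times\enne)$ and conditional expectation commutes with the coordinate maps), take $\ell_\infty$ and $\ell_1$ as endpoints --- producing the $\ell_1$ bound by dualizing the $\ell_\infty$ bound at the conjugate exponent --- and a single interpolation with $\theta=1/2$ lands on $\ell_2$ for every $p\in\,]{1,\infty}[$ at once. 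What your route buys is the elimination of the case split at $p=2$ and of the Jensen endpoint altogether; what it costs is that the duality step now lives in the mixed-norm pairing $\L_p(\ell_1)$--$\L_{p'}(\ell_\infty)$, where you should note (as you do) that the identity $\E[(\E_{n_k}f_k)g_k]=\E[f_k\,\E_{n_k}g_k]$ summed over $k$ exhibits $T$ as its own adjoint, and that this pairing is norming for $\L_p(\ell_1)$ (take $g_k=\operatorname{sgn}(h_k)\bigl(\sum_j|h_j|\bigr)^{p-1}$, suitably normalized, applied first to finitely supported sequences). With those routine verifications your proof is complete.
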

\begin{proof}
  Define the linear operator
  \[
  T: (f_k) \mapsto \bigl(\E_{n_k} f_k\bigr).
  \]
  Let us show that $T$ is bounded on $\L_p(\ell_p(\H))$: one has, by
  the conditional Jensen inequality and Tonelli's theorem,
  \begin{align*}
    \norm[\big]{Tf}_{\L_p(\ell_p(\H))}^p &\equiv
    \norm[\big]{\bigl(\E_{n_k}f_k\bigr)}_{\L_p(\ell_p(\H))}^p =
    \E \sum_k \norm[\big]{\E_{n_k}f_k}^p\\
    &\leq \E \sum_k \E_{n_k}\|f_k\|^p =
    \sum_k \E\E_{n_k}\|f_k\|^p \leq
    \sum_k \E\|f_k\|^p\\
    &= \E \sum_k \|f_k\|^p = \norm[\big]{f}_{\L_p(\ell_p(\H))}^p.
  \end{align*}
  Let us also show that $T$ is bounded on $\L_p(\ell_\infty(\H))$: one
  has
  \begin{align*}
  \norm[\big]{Tf}_{\L_p(\ell_\infty(\H))}^p &\equiv
  \E\sup_k \norm[\big]{\E_{n_k}f_k}^p \leq 
  \E\sup_k \bigl(\E_{n_k}\|f_k\|\bigr)^p\\
  &\leq \E\sup_n \bigl(\E_n \sup_k \|f_k\| \bigr)^p =
  \E\sup_n \bigl| \xi^k_n \bigr|^p,
  \end{align*}
  where $n \mapsto \xi_n := \E_n \xi_\infty$, with $\xi_\infty:=\sup_k
  \|f_k\|$, is a real-valued martingale. Doob's maximal inequality
  then yields
  \[
  \E\sup_n \bigl| \xi_n \bigr|^p \lesssim_p \E\bigl| \xi_\infty \bigr|^p 
  = \E \bigl( \sup_k \|f_k\| \bigr)^p =
  \norm[\big]{f}_{\L_p(\ell_\infty(\H))}.
  \]
  Identifying $\H$ with $\ell_2$, we have shown that
  \[
  \norm[\big]{T}_{L_{p,p,2} \to L_{p,p,2}} < \infty, \qquad
  \norm[\big]{T}_{L_{p,\infty,2} \to L_{p,\infty,2}} < \infty,
  \]
  hence Theorem \ref{thm:interp} implies that
  \[
  \norm[\big]{T}_{\L_p(\ell_q(\H)) \to \L_p(\ell_q(\H))} =
  \norm[\big]{T}_{L_{p,q,2} \to L_{p,q,2}} < \infty 
  \qquad \forall 1 < p \leq q < \infty.
  \]
  In particular, this proves the theorem in the case $1 < p \leq
  2$. Let us show that $T$ is a bounded endomorphism of
  $\L_p(\ell_2(\H))$ also if $p>2$.
  Let $p>2$ and $f \in \L_p(\ell_2(\H))$. Then one has, denoting the
  duality form between $\L_p(\ell_2(\H))$ and $\L_{p'}(\ell_2(\H))$ by
  $\ip{\cdot}{\cdot}$ and the unit ball of $\L_{p'}(\ell_2(\H))$ by
  $B_1$, taking into account that $T$ is self-adjoint on
  $\L_2(\ell_2(\H))$,
  \begin{align*}
    \norm[\big]{Tf}_{\L_p(\ell_2(\H))} &= \sup_{g\in B_1} \ip{Tf}{g}
    = \sup_{g \in B_1} \ip{f}{Tg}\\
    &\leq \sup_{g \in B_1} \norm{f}_{\L_p(\ell_2(\H))} \,
    \norm{Tg}_{\L_{p'}(\ell_2(\H))}\\
    &\leq \norm{f}_{\L_p(\ell_2(\H))} \, 
    \norm{T}_{\L_{p'}(\ell_2(\H)) \to \L_{p'}(\ell_2(\H))},
  \end{align*}
  where the operator norm of $T$ in the last term is finite because,
  as already proved, $T$ is a bounded endomorphism of
  $\L_{p'}(\ell_2(\H))$.
\end{proof}


\section{General case}     \label{sec:gen}
\subsection{Predictably bounded jumps}
\begin{prop}[Conditional lower bound, $p=1$]     \label{prop:clb1}
  Let $M$ be an $\H$-valued martingale and $D$ an increasing adapted
  process such that $\|\Delta M_-\| \leq D_-$. Then one has
  \[
  \norm[\big]{[M,M]_\infty^{1/2}}_{\L_1} \lesssim
  \norm[big]{M_\infty^* + D_\infty}_{\L_1}.
  \]
\end{prop}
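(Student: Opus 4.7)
The plan is to mimic Proposition~\ref{prop:lb2-c} for $p=1$, but modify the auxiliary weight so as to absorb the extra quadratic covariation term that jumps produce in integration by parts. Fix $\varepsilon>0$, set $V:=\varepsilon+M^*+D$ (c\`adl\`ag, increasing, adapted), write $\tilde{H}:=V^{-1/2}$ and $H:=V_-^{-1/2}$ (predictable, left-continuous), and define the local martingale $N:=H\cdot M$. Then $[N,N]=H^2\cdot[M,M]$ and, since $H^2$ is decreasing,
\[
H_\infty\,[M,M]_\infty^{1/2}\leq[N,N]_\infty^{1/2}.
\]

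Integration by parts applied to the product $\tilde{H}M$ gives
\[
N_t=\tilde{H}_tM_t-\int_0^tM_{s-}\,d\tilde{H}_s-[\tilde{H},M]_t,
\]
and each of the three terms can be bounded, in $\H$-norm, by $V_t^{1/2}$. The first satisfies $\tilde{H}_t\|M_t\|\leq V_t^{-1/2}M^*_t\leq V_t^{1/2}$. For the second, $\tilde{H}$ being decreasing and $\|M_{s-}\|\leq V_{s-}$, Lemma~\ref{lm:fv1} with $q=1/2$ yields
\[
\bigg\|\int_0^tM_{s-}\,d\tilde{H}_s\bigg\|\leq\int_0^tV_{s-}\,d(-V_s^{-1/2})\leq V_t^{1/2}.
\]
The third term $[\tilde{H},M]_t=\sum_{s\leq t}\Delta\tilde{H}_s\,\Delta M_s$ is absent in the continuous case, and here is precisely where the hypothesis $\|\Delta M_-\|\leq D_-$, read as $\|\Delta M_s\|\leq D_{s-}\leq V_{s-}$, is used: together with $\Delta\tilde{H}_s\leq 0$,
\[
\|[\tilde{H},M]_t\|\leq\sum_{s\leq t}V_{s-}(-\Delta\tilde{H}_s)\leq\int_0^tV_{s-}\,d(-V_s^{-1/2})\leq V_t^{1/2},
\]
again by Lemma~\ref{lm:fv1}. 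Hence $\|N_t\|\leq 3V_t^{1/2}$ and, since $V$ is increasing, $N^*_\infty\leq 3V_\infty^{1/2}$.

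The $\L_2$-identity $\E[N,N]_\infty=\E\|N_\infty\|^2\leq\E(N^*_\infty)^2$ gives $\|[N,N]_\infty^{1/2}\|_{\L_2}\leq 3\|V_\infty\|_{\L_1}^{1/2}$, and $\|H_\infty^{-1}\|_{\L_2}=\|V_\infty^{1/2}\|_{\L_2}=\|V_\infty\|_{\L_1}^{1/2}$. Combining the two via Cauchy--Schwarz,
\[
\|[M,M]_\infty^{1/2}\|_{\L_1}\leq\|H_\infty^{-1}\|_{\L_2}\,\|H_\infty[M,M]_\infty^{1/2}\|_{\L_2}\leq 3\|V_\infty\|_{\L_1}=3\|\varepsilon+M^*_\infty+D_\infty\|_{\L_1},
\]
and letting $\varepsilon\downarrow 0$ by monotone convergence gives the claim. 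A preliminary localisation by stopping times such as $T_n:=\inf\{t:M^*_t+D_t+[M,M]_t>n\}$ is needed to make $N$ a genuine $\L_2$-martingale before the above estimates may be taken to their final form.

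The only step that departs from the continuous argument is the treatment of the jump covariation $[\tilde{H},M]$, and this is the only place where the predictable bound on the jumps of $M$ enters; the whole proof is engineered so that this term admits the same $V^{1/2}$ estimate as the two other contributions, and hence it is the main technical obstacle to overcome compared with Proposition~\ref{prop:lb2-c}.
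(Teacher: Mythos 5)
Your proof is correct and is essentially the paper's own argument: the same auxiliary martingale $N=(\varepsilon+M^*+D)_-^{-1/2}\cdot M$, the same Cauchy--Schwarz splitting of $\norm{[M,M]_\infty^{1/2}}_{\L_1}$, and the predictable jump bound entering at the same point. The only cosmetic difference is that you use the semimartingale integration-by-parts formula and estimate the covariation term $[\tilde H,M]$ separately, whereas the paper uses the finite-variation identity \eqref{eq:ibp-fv}, which absorbs that term into $\int \|M_s\|\,|dH_s|$ via $\|M_s\|\le\|M_{s-}\|+\|\Delta M_s\|$.
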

\begin{proof}
  For a fixed $\varepsilon>0$, let us introduce the local martingale
  $N := H_- \cdot M$, where
  \[
  H := \bigl( \varepsilon + D + M^* \bigr)^{-1/2}.
  \]
  We are going to compare the $\L_2$ norms of $[N,N]_\infty$ and of
  $\|N_\infty\|^2$. To this purpose, note that, since $s \mapsto H_s$
  is decreasing, one has
  \[ 
  [N,N]_\infty = H_-^2 \cdot [M,M] \geq H_\infty^2 [M,M]_\infty,
  \]
  hence also $\norm[\big]{H_\infty [M,M]^{1/2}_\infty}_{\L_2} \leq
  \norm[\big]{[N,N]^{1/2}_\infty}_{\L_2}$. Moreover, the Cauchy-Schwarz
  inequality yields
  \begin{equation}    \label{eq:mdvf}
  \begin{split}
  \norm[\big]{[M,M]^{1/2}_\infty}_{\L_1}
  &= \norm[\big]{[M,M]^{1/2}_\infty H_\infty H_\infty^{-1}}_{\L_1}
  \leq \norm[\big]{[M,M]^{1/2}_\infty H_\infty}_{\L_2}
  \norm[\big]{H_\infty^{-1}}_{\L_2}\\
  &\leq \norm[\big]{[N,N]^{1/2}_\infty}_{\L_2} \norm[\big]{H_\infty^{-1}}_{\L_2}
  = \norm[\big]{N_\infty}_{\L_2(H)} \norm[\big]{H_\infty^{-1}}_{\L_2},
  \end{split}
  \end{equation}
  where
  \[
  \norm[\big]{H_\infty^{-1}}_{\L_2} =
  \norm[\big]{\bigl(\varepsilon + D_\infty + M_\infty^*\bigr)^{1/2}}_{\L_2}
  = \norm[\big]{\varepsilon + D_\infty + M_\infty^*}^{1/2}_{\L_1}
  \]
  Thanks to the integration-by-parts formula
  $H_- \cdot M=HM-M \cdot H$, one has
  \[
  \|N_\infty\| \leq H_\infty M^*_\infty + \int_0^\infty \|M_s\| \, dH_s,
  \]
  which implies, by the inequality $\|M_s\| \leq \|M_{s-}\| + \|\Delta
  M_s\| \leq \varepsilon + M^*_{s-} + D_{s-}$ and the definition of
  $H$,
  \[
  \|N_\infty\| \leq (\varepsilon + D_\infty + M^*_\infty)^{1/2} +
  \int_0^\infty (\varepsilon + M^*_{s-} + D_{s-})
  \,d\bigl(-(\varepsilon + D_s + M^*_s)^{-1/2}\bigr).
  \]  
  Setting $U:=H_-^{-1} \equiv (\varepsilon + D + M^*)^{1/2}$, the
  integral on the right hand side can be written as
  \[
  \int_0^\infty U_-^2\,d(-1/U) = \int_0^\infty \frac{U_-^2}{UU_-}\,dU
  \leq \int_0^\infty dU = (\varepsilon + D_\infty + M_\infty^*)^{1/2},
  \]
  hence
  \[
  \norm[\big]{N_\infty}_{\L_2(H)} \leq \norm[\big]{\bigl(\varepsilon + D_\infty
    + M_\infty^*\bigr)^{1/2}}_{\L_2} = \norm[\big]{\varepsilon + D_\infty +
    M_\infty^*}^{1/2}_{\L_1}.
  \]
  Taking \eqref{eq:mdvf} into account and recalling that
  $\varepsilon>0$ is arbitrary, the proof is completed.
\end{proof}

\begin{prop}[Conditional upper bound, $p=1$]     \label{prop:cub1}
  Let $M$ be an $\H$-valued martingale and $D$ an increasing adapted
  process such that $\|\Delta M_-\| \leq D_-$. Then one has
  \[
  \norm[\big]{M^*_\infty}_{\L_1} \lesssim 
  \norm[\big]{[M,M]^{1/2}_\infty + D_\infty}_{\L_1}.
  \]
\end{prop}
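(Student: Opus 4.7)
The plan is to mirror the argument of Proposition~\ref{prop:clb1}, i.e.\ to introduce an auxiliary local martingale $N := H_- \cdot M$ for a suitable positive decreasing predictable weight $H$, and to estimate $M^*_\infty$ via Cauchy--Schwarz and Doob's $\L_2$-inequality applied to $N$. The roles of the two sides are now swapped: we want to control $\norm{M^*_\infty}_{\L_1}$ from above by $\norm[\big]{[M,M]^{1/2}_\infty + D_\infty}_{\L_1}$, so we look for $H$ such that $\norm[\big]{H^{-1}_\infty}_{\L_2}^2$ equals, up to $\varepsilon$, this latter quantity. This suggests the choice
\[
H := \bigl( \varepsilon + [M,M]^{1/2} + D \bigr)^{-1/2}, \qquad \varepsilon > 0.
\]

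The first step is to bound $[N,N]_\infty$ in terms of $[M,M]^{1/2}_\infty$. Writing $U := [M,M]^{1/2}$ and using the finite-variation calculus rule \eqref{eq:fv-r1}, one has
\[
[N,N]_\infty = \int_0^\infty H_{s-}^2 \, d[M,M]_s = \int_0^\infty \frac{U_{s-} + U_s}{\varepsilon + U_{s-} + D_{s-}} \, dU_s.
\]
The subadditivity of the square root together with the jump hypothesis yield $\Delta U_s \leq \norm{\Delta M_s} \leq D_{s-}$, hence $U_- + U \leq 2U_- + D_-$, which gives $[N,N]_\infty \leq 2 U_\infty = 2 [M,M]^{1/2}_\infty$ and in particular $\norm[\big]{[N,N]^{1/2}_\infty}_{\L_2}^2 \leq 2 \norm[\big]{[M,M]^{1/2}_\infty}_{\L_1}$.

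The second step expresses $M^*_\infty$ through $N^*_\infty$. By associativity, $M = H^{-1}_- \cdot N$, and applying the integration-by-parts formula \eqref{eq:ibp-fv} to the increasing BV process $H^{-1}$ and the semimartingale $N$ yields $M_t = H_t^{-1} N_t - \int_0^t N_s \, dH_s^{-1}$. Taking norms and using that $H^{-1}$ is increasing with $H^{-1}_0 = \sqrt{\varepsilon} \geq 0$ gives $M^*_\infty \leq 2 H^{-1}_\infty N^*_\infty$. Combining with Cauchy--Schwarz and Doob's $\L_2$-inequality,
\[
\norm[\big]{M^*_\infty}_{\L_1} \lesssim \norm[\big]{H^{-1}_\infty}_{\L_2} \, \norm[\big]{[N,N]^{1/2}_\infty}_{\L_2} \lesssim \norm[\big]{\varepsilon + [M,M]^{1/2}_\infty + D_\infty}_{\L_1}^{1/2} \, \norm[\big]{[M,M]^{1/2}_\infty}_{\L_1}^{1/2},
\]
and since the second factor is bounded by the first, letting $\varepsilon \to 0$ produces the desired bound.

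The main delicate point will be the jump analysis in the first step: the predictable jump domination of $M$ enters precisely through the inequality $\Delta U \leq \norm{\Delta M}$ (via subadditivity of $\sqrt{\cdot}$), which is what keeps the integrand $(U_- + U)/(\varepsilon + U_- + D_-)$ bounded by $2$; the remainder is essentially a duality-flavoured repackaging of the same ingredients used in Proposition~\ref{prop:clb1}.
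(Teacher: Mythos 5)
Your proof is correct and follows essentially the same route as the paper's: an auxiliary local martingale $N=H_-\cdot M$ with a decreasing weight, the pathwise bound $M^*_\infty\le 2H^{-1}_\infty N^*_\infty$ from integration by parts, Cauchy--Schwarz plus Doob's $\L_2$-inequality, and the jump hypothesis to control $[N,N]_\infty$ by $[M,M]^{1/2}_\infty$. The only (immaterial) difference is the choice of weight: the paper takes $H\propto([M,M]+D^2)^{-1/4}$ and invokes \eqref{eq:fv-r2}, whereas you take $H=(\varepsilon+[M,M]^{1/2}+D)^{-1/2}$ and invoke \eqref{eq:fv-r1} together with $\Delta\bigl([M,M]^{1/2}\bigr)\le\|\Delta M\|\le D_-$; both give $[N,N]_\infty\lesssim[M,M]^{1/2}_\infty$.
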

\begin{proof}
  We adapt the proof of Propositon \ref{prop:ub2-c}. Let
  \[
  H := \sqrt{1/2} \bigl( [M,M] + D^2 \bigr)^{-1/4}, 
  \qquad N:= H_- \cdot M,
  \]
  so that $M=H_-^{-1} \cdot M$ and, by the integration-by-parts
  formula,
  \[
  M_\infty = (H_-^{-1} \cdot N)_\infty = H_\infty^{-1}N_\infty - (N
  \cdot H^{-1})_\infty.
  \]
  Taking norm on both sides and recalling that $s \mapsto H_s^{-1}$ is
  increasing, one has
  \[
  M^*_\infty \leq H^{-1}_\infty N^*_\infty + N^*_\infty \int_0^\infty
  d(H_s^{-1}) = 2H^{-1}_\infty N^*_\infty,
  \]
  hence also, using the Cauchy-Schwarz and Doob inequalities,
  \begin{align*}
  \norm[\big]{M^*_\infty}_{\L_1} &\leq 
  2 \norm[\big]{H^{-1}_\infty N^*_\infty}_{\L_1}
  \leq 2 \norm[\big]{H^{-1}_\infty}_{\L_2} \norm[\big]{N^*_\infty}_{\L_2}\\
  &\leq 4 \norm[\big]{H^{-1}_\infty}_{\L_2} \norm[\big]{N_\infty}_{\L_2(H)}
  = 4 \norm[\big]{H^{-1}_\infty}_{\L_2} \norm[\big]{[N,N]^{1/2}_\infty}_{\L_2},
  \end{align*}
  where
  \[
  \norm[\big]{H^{-1}_\infty}_{\L_2} = \sqrt{2} \,
  \norm[\big]{\bigl(\varepsilon + [M,M]_\infty + D^2_\infty\bigl)^{1/4}}_{\L_2}
  = \sqrt{2} \,
  \norm[\big]{\bigl(\varepsilon + [M,M]_\infty + D^2_\infty\bigl)^{1/2}}_{\L_1}.
  \]
  Moreover, by definition of $N$, one has
  \[
  [N,N]_\infty = \frac12 \int_0^\infty 
  \bigl( [M,M]_{s-} + D_{s-}^2 \bigr)^{-1/2} \,d[M,M]_s,
  \]
  hence, noting that $[M,M]_{s-} + D_{s-}^2 \geq [M,M]_{s-} + \|\Delta M_s \|^2
  = [M,M]_s$ for all $s \geq 0$,
  \begin{align*}
  [N,N]_\infty &\leq \int_0^\infty \frac{1}{2[M,M]_s^{1/2}}\,d[M,M]_s\\
  &\leq \int_0^\infty \frac{1}{[M,M]_{s-}^{1/2} + [M,M]_s^{1/2}}\,d[M,M]_s
  = [M,M]_\infty^{1/2},
  \end{align*}
  where we have used \eqref{eq:fv-r2}, as well as the fact that
  $[M,M]$ is an increasing process and $x \mapsto x^{-1/2}$ is
  decreasing. This implies
  \[
  \norm[\big]{[N,N]_\infty^{1/2}}_{\L_2} = \norm[\big]{[M,M]_\infty^{1/4}}_{\L_2} = 
  \norm[\big]{[M,M]_\infty^{1/2}}^{1/2}_{\L_1} \leq
  \norm[\big]{\bigl(\varepsilon + [M,M]_\infty + D^2_\infty\bigl)^{1/2}}_{\L_1},
  \]
  and the proof is finished by combining the estimates.
\end{proof}

We now consider the case $p \in ]1,2[$, whose proof proceeds along the
lines of the previous one, but is technically more complicated.
\begin{prop}[Conditional lower bound, $1 < p < 2$]     \label{prop:clb2-}
  Let $M$ be an $\H$-valued martingale and $D$ an increasing adapted
  process such that $\|\Delta M_-\| \leq D_-$. Then one has, for any
  $p \in ]1,2[$,
  \[
  \bigl\| [M,M]_\infty^{1/2} \bigr\|_{\L_p} \lesssim_p \bigl\|
  M_\infty^* + D_\infty \bigr\|_{\L_p}.
  \]
\end{prop}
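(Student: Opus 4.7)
The plan is to imitate the strategy of Proposition \ref{prop:clb1} (the $p=1$ case), adjusting the exponent in the auxiliary integrand so that the exponents in the final H\"older estimate come out right. Concretely, for $\varepsilon>0$ I introduce
\[
H := \bigl(\varepsilon + D + M^*\bigr)^{p/2-1}, \qquad N := H_- \cdot M.
\]
Since $p/2-1 < 0$ and $D+M^*$ is increasing, $H$ is decreasing, so the identity $[N,N]=H_-^2\cdot [M,M]$ yields $[N,N]_\infty \geq H_\infty^2\,[M,M]_\infty$, i.e.
\[
\bigl\|H_\infty [M,M]_\infty^{1/2}\bigr\|_{\L_2} \leq \bigl\|[N,N]_\infty^{1/2}\bigr\|_{\L_2} = \norm[\big]{N_\infty}_{\L_2(\H)}.
\]

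Next I apply H\"older's inequality $\|XY\|_{\L_p} \leq \|X\|_{\L_2}\|Y\|_{\L_q}$ with $1/p=1/2+1/q$, so that $q=2p/(2-p)$, getting
\[
\bigl\| [M,M]_\infty^{1/2}\bigr\|_{\L_p} \leq \bigl\|H_\infty [M,M]_\infty^{1/2}\bigr\|_{\L_2}\,\bigl\|H_\infty^{-1}\bigr\|_{\L_q}.
\]
A direct computation, exploiting the identity $q(1-p/2)=p$, gives
\[
\bigl\|H_\infty^{-1}\bigr\|_{\L_q} = \bigl\|\varepsilon + D_\infty + M_\infty^*\bigr\|_{\L_p}^{1-p/2}.
\]

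The heart of the argument is the bound on $\|N_\infty\|_{\L_2(\H)}$. I would use the integration-by-parts formula $H_-\cdot M = HM - M\cdot H$ to write
\[
\|N_\infty\| \leq H_\infty M_\infty^* + \int_0^\infty \|M_s\|\,d(-H_s).
\]
From $\|M_s\| \leq \|M_{s-}\| + \|\Delta M_s\| \leq M_{s-}^* + D_{s-}$ (and adding $\varepsilon$ to absorb the $s=0$ term), together with the monotonicity of $D+M^*$, the integral is controlled by
\[
\int_0^\infty \bigl(\varepsilon + D_{s-} + M^*_{s-}\bigr)\,d\bigl(-(\varepsilon+D_s+M_s^*)^{p/2-1}\bigr),
\]
which, applying the second inequality of Lemma \ref{lm:fv1} with $V=\varepsilon+D+M^*$ and $q=p/2\in(1/2,1)$, is bounded by $\frac{2-p}{p}(\varepsilon + D_\infty + M^*_\infty)^{p/2}$. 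Since $H_\infty M_\infty^* \leq (\varepsilon+D_\infty+M_\infty^*)^{p/2}$ as well, I conclude
\[
\|N_\infty\| \leq \tfrac{2}{p}\bigl(\varepsilon + D_\infty + M_\infty^*\bigr)^{p/2},
\]
whence $\|N_\infty\|_{\L_2(\H)} \leq \frac{2}{p}\|\varepsilon+D_\infty+M_\infty^*\|_{\L_p}^{p/2}$. Combining the three estimates gives
\[
\bigl\|[M,M]_\infty^{1/2}\bigr\|_{\L_p} \leq \tfrac{2}{p}\,\bigl\|\varepsilon + D_\infty + M_\infty^*\bigr\|_{\L_p},
\]
and letting $\varepsilon\to 0$ finishes the proof. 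I expect the main obstacle to be verifying that the calculus estimate from Lemma \ref{lm:fv1} is applicable at $V_0=0$; this is the same (harmless) regularization issue already handled in Section 4.1, and the introduction of $\varepsilon$ is precisely what takes care of it.
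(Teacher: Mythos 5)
Your proposal is correct and follows essentially the same route as the paper: the same auxiliary martingale $N=H_-\cdot M$ with $H=(\varepsilon+D+M^*)^{p/2-1}$, the same H\"older step with $q=2p/(2-p)$, the same integration-by-parts bound on $\|N_\infty\|$ via $\|M_s\|\le \varepsilon+M^*_{s-}+D_{s-}$, and the same appeal to Lemma \ref{lm:fv1} with $V=\varepsilon+D+M^*$. The regularization concern you flag at the end is indeed handled exactly as you anticipate, by the presence of $\varepsilon$.
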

\begin{proof}
  Let us introduce the local martingale $N=H_- \cdot M$, with
  \[
  H := \bigl( \varepsilon + M^* + D \bigr)^{p/2-1}.
  \]
  Defining $q>0$ by $p^{-1}=1/2+q^{-1}$, i.e. $q=2p/(2-p)$, H\"older's
  inequality yields
  \[
  \norm[\big]{[M,M]_\infty^{1/2}}_{\L_p} =
  \norm[\big]{[M,M]_\infty^{1/2}H_\infty H_\infty^{-1}}_{\L_p} \leq
  \norm[\big]{[M,M]_\infty^{1/2}H_\infty}_{\L_2} \norm[\big]{H_\infty^{-1}}_{\L_q},
  \]
  where, by definition of $H$ and elementary computations,
  \[
  \norm[\big]{H_\infty^{-1}}_{\L_q} =
  \norm[\big]{\varepsilon+M_\infty^*+D_\infty}_{\L_p}^{1-p/2}.
  \]
  Moreover, since $s \mapsto H_s$ is decreasing, one has $[N,N]_\infty
  = \bigl( H_-^2 \cdot [M,M] \bigr)_\infty \geq H^2_\infty
  [M,M]_\infty$, which implies
  $\norm[\big]{[M,M]_\infty^{1/2}H_\infty}_{\L_2} \leq
  \norm[\big]{[N,N]_\infty^{1/2}}_{\L_2}$, hence also
  \[
  \norm[\big]{[M,M]_\infty^{1/2}}_{\L_p} \leq
  \norm[\big]{\varepsilon+M_\infty^*+D_\infty}_{\L_p}^{1-p/2}
  \norm[\big]{[N,N]_\infty^{1/2}}_{\L_2} = 
  \norm[\big]{\varepsilon+M_\infty^*+D_\infty}_{\L_p}^{1-p/2}
  \norm[\big]{N_\infty}_{\L_2(H)}. 
  \]
  The integration-by-parts formula
  \[
  N_\infty = \bigl( H_- \cdot M \bigr)_\infty 
  = H_\infty M_\infty + \int_0^\infty M_s\,d(-H_s) 
  \]
  and the inequality $\|M_s\| \leq \|M_{s-}\| + \|\Delta M_s\| \leq
  \varepsilon + M^*_{s-} + D_{s-}$ (valid for all $s\geq 0$) yield
  \begin{align*}
    N^*_\infty &= (\varepsilon + D_\infty + M^*_\infty)^{p/2-1} M_\infty
    + \int_0^\infty M_s\,d\bigl(-(\varepsilon + D_s + M^*_s)^{p/2-1}\bigr)\\
    &\leq (\varepsilon + D_\infty + M^*_\infty)^{p/2} + \int_0^\infty
    (\varepsilon + M^*_{s-} + D_{s-}) \,d\bigl(-(\varepsilon + D_s
    + M^*_s)^{p/2-1}\bigr).
  \end{align*}
  Setting $V:=\varepsilon + M^* + D$ and appealing to Lemma
  \ref{lm:fv1}, one obtains the estimate
  \begin{align*}
    &\int_0^\infty (\varepsilon + M^*_{s-} + D_{s-})
    \,d\bigl(-(\varepsilon + D_s + M^*_s)^{p/2-1}\bigr)\\
    &\hspace{4em}= \int_0^\infty V_- \,d(-V^{p/2-1})
    \leq  \frac{2-p}{p} V_\infty^{p/2}
    = \frac{2-p}{p} \bigl( \varepsilon + M^*_\infty + D_\infty \bigr)^{p/2},
  \end{align*}
  which yields
  \[
  \norm[\big]{N_\infty}_{\L_2(H)} \leq \norm[\big]{N^*_\infty}_{\L_2} \leq
  \frac2p \norm[\big]{\bigl( \varepsilon + M^*_\infty + D_\infty
    \bigr)^{p/2}}_{\L_2} 
  = \frac2p \norm[\big]{\varepsilon + M^*_\infty + D_\infty}_{\L_p}^{p/2}.
  \]
  Collecting estimates, one concludes that
  \[
  \norm[\big]{[M,M]_\infty^{1/2}}_{\L_p} \leq \frac2p
  \norm[\big]{\varepsilon + M^*_\infty + D_\infty}_{\L_p}.
  \]
  The proof is completed by recalling that $\varepsilon>0$ is
  arbitrary, hence the last inequality holds also with
  $\varepsilon=0$.
\end{proof}

\begin{prop}[Conditional upper bound, $1 < p < 2$]     \label{prop:cub2-}
  Let $M$ be an $\H$-valued martingale and $D$ an increasing adapted
  process such that $\|\Delta M_-\| \leq D_-$. Then one has, for any
  $p \in ]1,2[$,
  \[
  \norm[\big]{M_\infty^*}_{\L_p} \lesssim_p
  \norm[\big]{[M,M]_\infty^{1/2} + D_\infty}_{\L_p}.
  \]
\end{prop}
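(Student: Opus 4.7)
The plan is to adapt the proof of Proposition \ref{prop:cub1} to the range $p \in \,]1,2[$, mirroring the way Proposition \ref{prop:ub2-c} was adapted to the conditional setting. Concretely, fix $\varepsilon>0$, set
\[
H := \sqrt{p/2}\bigl(\varepsilon + [M,M] + D^2\bigr)^{p/4-1/2},
\qquad N := H_-\cdot M,
\]
and compare weighted norms of $M^*_\infty$ and of $[N,N]^{1/2}_\infty$. Since $p<2$ the exponent $p/4-1/2$ is negative, so $s\mapsto H_s$ is decreasing and $s\mapsto H^{-1}_s$ is increasing. The associativity relation $M=H^{-1}_-\cdot N$ combined with integration by parts then gives, exactly as in the earlier propositions,
\[
M_\infty = H^{-1}_\infty N_\infty - (N\cdot H^{-1})_\infty,
\qquad M^*_\infty \le 2 H^{-1}_\infty N^*_\infty.
\]

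Next I would apply H\"older's inequality with $1/p=1/2+1/q$, i.e.\ $q=2p/(2-p)$, and Doob's inequality to get
\[
\norm[\big]{M^*_\infty}_{\L_p}
\le 2\norm[\big]{H^{-1}_\infty}_{\L_q}\norm[\big]{N^*_\infty}_{\L_2}
\le 4\norm[\big]{H^{-1}_\infty}_{\L_q}\norm[\big]{[N,N]^{1/2}_\infty}_{\L_2}.
\]
The choice of exponent makes $q(1/2-p/4)=p/2$, so by direct computation
\[
\norm[\big]{H^{-1}_\infty}_{\L_q}
= \sqrt{2/p}\,\norm[\big]{(\varepsilon+[M,M]_\infty+D^2_\infty)^{1/2}}_{\L_p}^{1-p/2}.
\]

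The main obstacle is to bound $[N,N]_\infty$ purely in terms of $(\varepsilon+[M,M]_\infty+D^2_\infty)^{p/2}$, since $H_-$ involves left limits while we want an estimate in the ``unminused'' processes. This is where the hypothesis $\norm{\Delta M_-}\le D_-$ enters: exactly as in Proposition \ref{prop:cub1}, one has $[M,M]_{s-}+D_{s-}^2\ge [M,M]_{s-}+\norm{\Delta M_s}^2=[M,M]_s$, and since $p/2-1<0$ the inequality reverses, giving
\[
[N,N]_\infty = \tfrac{p}{2}\int_0^\infty (\varepsilon+[M,M]_{s-}+D^2_{s-})^{p/2-1}\,d[M,M]_s
\le \tfrac{p}{2}\int_0^\infty (\varepsilon+[M,M]_s)^{p/2-1}\,d[M,M]_s.
\]
An integration-by-parts argument for FV processes, essentially dual to Lemma \ref{lm:fv1} (decomposing $V^q=V_-\cdot V^{q-1}+V^{q-1}\cdot V$ and invoking the $q\in\,]0,1[$ statement of the lemma with $q=p/2$ and $V=\varepsilon+[M,M]$), yields $\int_0^\infty V_s^{q-1}\,dV_s\le V_\infty^q/q$, and hence $[N,N]_\infty\le (\varepsilon+[M,M]_\infty)^{p/2}\le (\varepsilon+[M,M]_\infty+D^2_\infty)^{p/2}$.

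Taking $\L_2$ norms,
\[
\norm[\big]{[N,N]^{1/2}_\infty}_{\L_2}
\le \norm[\big]{(\varepsilon+[M,M]_\infty+D^2_\infty)^{1/2}}_{\L_p}^{p/2},
\]
and multiplying by $\norm{H^{-1}_\infty}_{\L_q}$ collapses the two fractional powers into a single first-power norm. This gives
\[
\norm[\big]{M^*_\infty}_{\L_p}
\lesssim_p \norm[\big]{(\varepsilon+[M,M]_\infty+D^2_\infty)^{1/2}}_{\L_p}
\le \norm[\big]{\varepsilon^{1/2}+[M,M]^{1/2}_\infty+D_\infty}_{\L_p},
\]
and letting $\varepsilon\downarrow 0$ produces the desired inequality. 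The only delicate point is the calculus step for $[N,N]_\infty$; everything else is bookkeeping following the pattern already established.
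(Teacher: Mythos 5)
Your proposal is correct and follows essentially the same route as the paper: the same auxiliary martingale $N=H_-\cdot M$ with $H=\sqrt{p/2}\,(\varepsilon+[M,M]+D^2)^{p/4-1/2}$, the same H\"older/Doob bookkeeping with $q=2p/(2-p)$, the same use of $[M,M]_{s-}+D_{s-}^2\ge[M,M]_s$ to remove the left limits, and the same appeal to Lemma \ref{lm:fv1} to get $[N,N]_\infty\le(\varepsilon+[M,M]_\infty)^{p/2}$. If anything, keeping $(\varepsilon+[M,M]_\infty+D_\infty^2)^{1/2}$ throughout and only using subadditivity of the square root at the very end is a slightly cleaner way to collapse the two fractional powers than what the paper writes.
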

\begin{proof}
  Let $\varepsilon>0$ be arbitrary, and define the processes
  \[
  H := \sqrt{p/2} \, \bigl( \varepsilon + [M,M] + D^2 \bigr)^{p/4-1/2},
  \qquad N:= H_- \cdot M,
  \]
  so that $M= H_-^{-1} \cdot N$, hence also, exactly as in the proof
  of Proposition \ref{prop:cub1}, $M_\infty^* \leq 2 H_\infty^{-1}
  N_\infty^*$, which in turn implies, by H\"older's inequality,
  \[
  \norm[\big]{M_\infty^*}_{\L_p} \leq 2 \norm[\big]{H_\infty^{-1} N_\infty^*}_{\L_p}
  \leq 2 \norm[\big]{H_\infty^{-1}}_{\L_q} \norm[\big]{N_\infty^*}_{\L_2},
  \]
  where $q>0$ is defined by $p^{-1}=1/2+q^{-1}$, i.e. $q=2p/(2-p)$. By
  definition of $H$, one has
  \begin{align*}
    \norm[\big]{H_\infty^{-1}}_{\L_q} &= \sqrt{2/p} \,
    \norm[\big]{\bigl( \varepsilon + [M,M]_\infty + D_\infty^2 
           \bigr)^{1/2(1-p/2)}}_{\L_q}\\
    &\leq \sqrt{2/p} \, \norm[\big]{\bigl( \varepsilon + [M,M]^{1/2} + D 
                               \bigr)^{1-p/2}}_{\L_q}
    = \sqrt{2/p} \, \norm[\big]{\varepsilon + [M,M]^{1/2} + D}_{\L_p}^{1-p/2}.
  \end{align*}
  Moreover, since
  \[ 
  [N,N]_\infty = \bigl( H_-^2 \cdot [M,M] \bigr)_\infty = 
  \frac{p}{2} \int_0^\infty \bigl( \varepsilon + [M,M]_{s-} + D_{s-}^2 
  \bigr)^{p/2-1}\,d[M,M]_s
  \]
  and $[M,M]_{s-} + D_{s-}^2 \geq [M,M]_{s-} + \|\Delta M_s\|^2 =
  [M,M]_s$, one has
  \[
  [N,N]_\infty \leq \frac{p}{2} \int_0^\infty
  \bigl( \varepsilon + [M,M]_s\bigr)^{p/2-1}\,d(\varepsilon + [M,M]_s),
  \]
  because $x \mapsto x^{p/2-1}$ is decreasing (recall that $p/2-1<0$).
  Setting $V=\varepsilon + [M,M]$ and recalling the
  integration-by-parts formula \eqref{eq:ibp-fv} as well as Lemma
  \ref{lm:fv1}, one has
  \begin{multline*}
  \int_0^\infty \bigl( \varepsilon + [M,M]_s
  \bigr)^{p/2-1}\,d(\varepsilon + [M,M]_s) \\
  = V_\infty^{p/2} + 
  \int_0^\infty V_{s-} \,d\bigl(-V_s^{p/2-1}\bigr)
  \leq \frac2p V^{p/2} = \frac2p \bigl( \varepsilon + [M,M]_\infty \bigr)^{p/2},
  \end{multline*}
  in particular $[N,N]^{1/2}_\infty \leq (\varepsilon +
  [M,M]^{1/2}_\infty)^{p/2}$.  Doob's inequality then yields
  \begin{align*}
  \norm[\big]{N^*_\infty}_{\L_2} &\leq 2\norm[\big]{N_\infty}_{\L_2(H)}
  = 2 \norm[\big]{[N,N]^{1/2}_\infty}_{\L_2}\\
  &\leq 2 \norm[\big]{\bigl(
   \varepsilon + [M,M]^{1/2}_\infty \bigr)^{p/2}}_{\L_2}
  \leq 2 \norm[\big]{\varepsilon + [M,M]^{1/2}_\infty + D_\infty}^{p/2}_{\L_p}.
  \end{align*}
  Collecting estimates, one ends up with
  \[
  \norm[\big]{M^*_\infty}_{\L_p} \leq 4\sqrt{2/p} \,
  \norm[\big]{\varepsilon + [M,M]^{1/2}_\infty + D_\infty}_{\L_p}.
  \]
  The proof is completed, once again, simply by recalling that
  $\varepsilon>0$ is arbitrary.
\end{proof}

\begin{prop}[Conditional lower bound, $p>2$]     \label{prop:clb2+}
  Let $M$ be an $\H$-valued martingale and $D$ an increasing adapted
  process such that $\|\Delta M_-\| \leq D_-$. Then one has, for any
  $p \in \, ]{2,\infty}[$,
  \[
  \norm[\big]{[M,M]_\infty^{1/2}}_{\L_p} \lesssim_p 
  \norm[\big]{M^*_\infty}_{\L_p} + \norm[\big]{D_\infty}_{\L_p}
  \]
\end{prop}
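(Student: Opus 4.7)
The plan is to adapt the auxiliary-martingale strategy of Proposition 3.3 to the jumpy case, using $D$ to control the size of jumps. I would set
\[
H := \sqrt{p/2}\,\bigl([M,M] + D^2\bigr)^{p/4-1/2}, \qquad N := H_- \cdot M,
\]
noting that since $p>2$ the exponent $p/4-1/2$ is positive, so $H$ is an increasing process and no $\varepsilon$-regularization is needed. The core of the proof is to chain
\[
\bigl\|[M,M]_\infty^{1/2}\bigr\|_{\L_p}^{p/2} \;\leq\; \bigl\|[N,N]_\infty^{1/2}\bigr\|_{\L_2} \;=\; \|N_\infty\|_{\L_2(\H)} \;\leq\; \|N^*_\infty\|_{\L_2},
\]
and then bound the right-hand side by $\|H_\infty\|_{\L_q}\|M^*_\infty\|_{\L_p}$ via H\"older's inequality, with $1/2 = 1/p + 1/q$, i.e.\ $q=2p/(p-2)$.

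For the first (nontrivial) inequality above, one computes
\[
[N,N]_\infty = \tfrac{p}{2}\int_0^\infty \bigl([M,M]_{s-} + D_{s-}^2\bigr)^{p/2-1}\,d[M,M]_s;
\]
the hypothesis $\|\Delta M_s\| \leq D_{s-}$ yields $[M,M]_{s-}+D_{s-}^2 \geq [M,M]_s$, and since $p/2-1>0$, we obtain $[N,N]_\infty \geq (p/2)\int [M,M]_s^{p/2-1}\,d[M,M]_s$. The convexity/tangent-line estimate $V_s^q - V_{s-}^q \leq q\,V_s^{q-1}\Delta V_s$ for $q>1$, applied with $V=[M,M]$ and $q=p/2$ and combined with the continuous-part contribution, gives $\int V_s^{q-1}\,dV_s \geq V_\infty^q/q$, hence $[N,N]_\infty \geq [M,M]_\infty^{p/2}$. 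For the H\"older step, the integration-by-parts identity $(H_-\cdot M)_t = H_t M_t - \int_0^t M_s\,dH_s$ (using $M_0=0$) together with $H$ increasing yields the pathwise bound $N^*_\infty \leq 2H_\infty M^*_\infty$. A direct computation using $(a+b^2)^{1/2}\leq a^{1/2}+b$ and the identity $(p-2)/4 \cdot q = p/2$ then gives
\[
\|H_\infty\|_{\L_q} \leq \sqrt{p/2}\,\bigl(\|[M,M]_\infty^{1/2}\|_{\L_p}+\|D_\infty\|_{\L_p}\bigr)^{(p-2)/2}.
\]

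Combining everything, I would write $A := \|[M,M]_\infty^{1/2}\|_{\L_p}$, $B := \|D_\infty\|_{\L_p}$, $C := \|M^*_\infty\|_{\L_p}$ and arrive at $A^{p/2} \lesssim_p (A+B)^{(p-2)/2} C$; from this the desired conclusion $A \lesssim_p B+C$ follows by splitting into the trivial case $A \leq B$ and the case $A > B$ (in which $(A+B)^{(p-2)/2} \leq 2^{(p-2)/2}A^{(p-2)/2}$ can be absorbed). I expect the main obstacle to be justifying the convexity bound $\int V_s^{q-1}\,dV_s \geq V_\infty^q/q$ cleanly in the jumpy case, since in the continuous case it is the trivial fundamental theorem of calculus; the tangent-line argument needs to correctly handle both the continuous and the purely discontinuous parts of $V=[M,M]$. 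Localization via $T_n := \inf\{t:\,[M,M]_t+D_t^2 > n\}$ and monotone convergence, needed to ensure that $N$ is a true $\L_2$ martingale and that all $\L_p$-norms appearing are a priori finite, is otherwise routine.
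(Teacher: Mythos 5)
Your proposal is correct and follows essentially the same route as the paper: the same auxiliary martingale $N=H_-\cdot M$ with $H=\sqrt{p/2}\,([M,M]+D^2)^{p/4-1/2}$, the same use of $[M,M]_{s-}+D_{s-}^2\ge[M,M]_s$, the same chain through $\|N^*_\infty\|_{\L_2}$ and H\"older with $q=2p/(p-2)$. The only cosmetic differences are that the paper derives $\int V^{q-1}\,dV\ge V_\infty^q/q$ via integration by parts and its Lemma on finite-variation calculus (your direct tangent-line estimate is the same inequality in disguise), and it absorbs the final $(A+B)^{(p-2)/2}$ factor by Young's inequality rather than by your case split.
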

\begin{proof}
  Let us set
  \[
  H=\sqrt{p/2} \, \bigl( [M,M]+D^2 \bigr)^{p/4-1/2}, 
  \qquad N=H_- \cdot M,
  \]
  so that
  \[ 
  [N,N]_\infty = \frac{p}{2} \int_0^\infty \bigl( 
  [M,M]_{s-} + D_{s-}^2 \bigr)^{p/2-1}\,d[M,M]_s
  \geq \frac{p}{2} \int_0^\infty [M,M]_s^{p/2-1}\,d[M,M]_s,
  \]
  where we have used the inequality $[M,M]_- + D_-^2 \geq [M,M]_- +
  \|\Delta M\|^2 = [M,M]$. Let us show that $[N,N]_\infty \geq
  [M,M]_\infty^{p/2}$: setting $V:=[M,M]$ and $q:=p/2$ for
  notational convenience, one has, using the integration-by-parts
  formula \eqref{eq:ibp-fv},
  \[ 
  [N,N]_\infty \geq q \int_0^\infty V^{q-1}\,dV
  = q \left( V_\infty^q - \int_0^\infty V_-\,d(V^{q-1}) \right).
  \]
  Lemma \ref{lm:fv1} then yields
  \[ 
  [N,N]_\infty \geq q \left( V_\infty^q - \int_0^\infty V_-\,d(V^{q-1}) \right)
  \geq q(1-(q-1)/q) V_\infty^q = [M,M]^{p/2}_\infty,
  \]
  hence, immediately,
  \[
  \norm[\big]{[M,M]^{1/2}_\infty}_{\L_p} \leq
  \norm[\big]{[N,N]^{1/2}_\infty}^{2/p}_{\L_2} =
  \norm[\big]{N_\infty}^{2/p}_{\L_2(H)} \leq \norm[\big]{N^*_\infty}^{2/p}_{\L_2}.
  \]
  Moreover, integrating by parts, one has
  \[
  N_\infty = (H_- \cdot M)_\infty = H_\infty M_\infty - \int_0^\infty M_t\,dH_t,
  \]
  which implies $N_\infty^* \leq 2 H_\infty M^*_\infty$, thus also,
  setting $q:=2p/(p-2)$,
  \[
  \norm[\big]{N^*_\infty}_{\L_2} \leq 2 \norm[\big]{H_\infty M^*_\infty}_{\L_2}
  \leq 2 \norm[\big]{H_\infty}_{\L_q} \norm[\big]{M^*_\infty}_{\L_p}
  \leq \sqrt{2p} \, 
  \norm[\big]{\bigl([M,M]_\infty+D^2_\infty\bigr)^{1/2}}^{p/2-1}_{\L_p} 
  \norm[\big]{M^*_\infty}_{\L_p}.
  \]
  The last two estimates and the elementary inequality $(a+b)^{1/2}
  \leq a^{1/2}+b^{1/2}$ yield
  \[
  \norm[\big]{[M,M]^{1/2}_\infty}_{\L_p} \leq (2p)^{1/p} \,
  \norm[\big]{[M,M]^{1/2}_\infty+D_\infty}^{1-2/p}_{\L_p}
  \norm[\big]{M^*_\infty}^{2/p}_{\L_p}.
  \]
  Let us apply Young's inequality in the form
  \[
  ab \leq \varepsilon \frac{a^q}{q} + N(\varepsilon) \frac{b^{q'}}{q'},
  \qquad \frac1q + \frac1{q'}=1, \qquad
  N(\varepsilon) = \varepsilon^{-1/(q-1)},
  \]
  choosing $q=p/(p-2)$, hence $q'=p/2$, and $\varepsilon=\ds \frac{q}2
  (2p)^{-1/p}$: we obtain
  \begin{align*}
  \norm[\big]{[M,M]^{1/2}_\infty}_{\L_p} &\leq
  \frac12 \norm[\big]{[M,M]^{1/2}_\infty+D_\infty}_{\L_p} 
  + N(p) \norm[\big]{M^*_\infty}_{\L_p}\\
  &\leq
  \frac12 \norm[\big]{[M,M]^{1/2}_\infty}_{\L_p} 
  + \frac12 \norm[\big]{D_\infty}_{\L_p} 
  + N(p) \norm[\big]{M^*_\infty}_{\L_p},
  \end{align*}
  with $N(p) = 2^{p/2+1}$ (which is not the optimal constant), hence,
  collecting terms,
  \[
  \norm[\big]{[M,M]^{1/2}_\infty}_{\L_p} \leq 2^{p/2+2} \,
  \norm[\big]{M^*_\infty}_{\L_p} + \norm[\big]{D_\infty}_{\L_p}.  \qedhere
  \]
\end{proof}

\subsection{Davis' decomposition}
The following result, known as Davis' decomposition (whose
continuous-time adaptation is due to Meyer \cite{Mey:dual}), is the
key to extend the maximal estimates of the previous subsection to
general martingales without any assumption on their jumps. The proof
we give here follows closely \cite{Mey:dual}.
\begin{lemma}[Davis' decomposition]     \label{lm:davis}
  Let $M$ be an $\H$-valued martingale, and define $S:=(\Delta M)^*$,
  i.e. $S_t := \sup_{s \leq t} \norm{\Delta M_s}$ for all $t \geq
  0$. Then there exist martingales $L$ and $K$ such that
  \begin{itemize}
  \item[\emph{(i)}] $M=L+K$;
  \item[\emph{(ii)}] $\norm{\Delta L} \leq 4S_-$;
  \item[\emph{(iii)}] $K=K^1-K^2$, where $\int_0^\infty |dK^1| \leq
    2S_\infty$ and $K^2$ is the compensator of $K^1$.
  \end{itemize}
\end{lemma}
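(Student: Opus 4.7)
The plan is to split the jumps of $M$ into a ``small'' part and a ``big'' part: jumps with $\norm{\Delta M_s} \leq 2 S_{s-}$ stay in $L$, while jumps with $\norm{\Delta M_s} > 2 S_{s-}$ are collected into $K^1$ and then compensated. Concretely, I would set
\[
K^1_t := \sum_{s \leq t} \Delta M_s \, \mathbf{1}_{\{\norm{\Delta M_s} > 2 S_{s-}\}},
\]
observe that on the event $\{\norm{\Delta M_s} > 2 S_{s-}\}$ one has $S_s = \norm{\Delta M_s}$ and therefore $S_s - S_{s-} \geq \tfrac12 \norm{\Delta M_s}$, which gives the telescoping bound $\int_0^\infty |dK^1| \leq 2 S_\infty$. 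After a standard localization argument (stopping at $T_n := \inf\{t : S_t > n\}$), this makes $K^1$ a process of integrable variation, so Proposition \ref{prop:veme} produces a compensator $K^2$, and I would set $K := K^1 - K^2$ and $L := M - K$. Both $K$ and $L$ are then martingales (on each stopped interval, and hence, by passing to the limit, genuinely).

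The pathwise bound on $\Delta L = (\Delta M - \Delta K^1) + \Delta K^2$ splits naturally into the two summands. For the first, on $\{\norm{\Delta M_s} > 2 S_{s-}\}$ we have $\Delta M_s - \Delta K^1_s = 0$, and on the complement we have $\Delta K^1_s = 0$ and $\norm{\Delta M_s} \leq 2 S_{s-}$, so in either case
\[
\norm{\Delta M_s - \Delta K^1_s} \leq 2 S_{s-}.
\]

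The main step, which is also the one requiring the most care, is to show $\norm{\Delta K^2_s} \leq 2 S_{s-}$. The key trick is that at any predictable stopping time $\tau$, $\Delta K^2_\tau = \E[\Delta K^1_\tau \mid \mathcal{F}_{\tau-}]$, and because $M$ is a martingale one has $\E[\Delta M_\tau \mid \mathcal{F}_{\tau-}] = 0$, so
\[
\Delta K^2_\tau = \E\bigl[\Delta M_\tau \, \mathbf{1}_{\{\norm{\Delta M_\tau} > 2 S_{\tau-}\}} \bigm| \mathcal{F}_{\tau-}\bigr] = -\E\bigl[\Delta M_\tau \, \mathbf{1}_{\{\norm{\Delta M_\tau} \leq 2 S_{\tau-}\}} \bigm| \mathcal{F}_{\tau-}\bigr].
\]
The integrand in the last expression is now bounded by $2 S_{\tau-}$, and since $S_{\tau-}$ is $\mathcal{F}_{\tau-}$-measurable this yields $\norm{\Delta K^2_\tau} \leq 2 S_{\tau-}$; exhausting all predictable jump times of the predictable process $K^2$ gives the pointwise bound $\norm{\Delta K^2} \leq 2 S_-$. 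Combining the two estimates yields $\norm{\Delta L} \leq 4 S_-$ as claimed.

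The main obstacles I anticipate are administrative rather than conceptual: first, justifying that the compensator $K^2$ really exists in the $H$-valued setting (this is exactly what Proposition \ref{prop:veme} is designed to handle, modulo a localization to ensure $\E\int|dK^1| < \infty$); second, ensuring that the identity $\Delta K^2_\tau = \E[\Delta K^1_\tau \mid \mathcal{F}_{\tau-}]$ at predictable times extends to a genuine pointwise bound on the predictable process $\Delta K^2$, which uses that a predictable process with finite variation is determined by its values at its (countably many, predictable) jump times.
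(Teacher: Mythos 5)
Your proposal is correct and follows essentially the same route as the paper: the same cut-off $\norm{\Delta M_s} \gtrless 2S_{s-}$ defining $K^1$, the same telescoping bound $\norm{\Delta M_s}\le 2(S_s-S_{s-})$ giving $\int|dK^1|\le 2S_\infty$, and the same use of $\E[\Delta M_\tau\mid\mathcal{F}_{\tau-}]=0$ at predictable times to bound $\norm{\Delta K^2}$ by $2S_-$ (the paper phrases this by splitting jump times into totally inaccessible and predictable ones, but the computation is identical). Your explicit localization to secure integrable variation before invoking the compensator is a point the paper leaves implicit.
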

\begin{proof}
  Let us define the process $K^1$ by
  \begin{equation}     \label{eq:zompa}
  K_t^1 := \sum_{s \leq t} \Delta M_s 1_{\{\| \Delta M_s \| \geq
    2S_{t-}\}} \qquad \forall t \geq 0.
  \end{equation}
  Note that, if $t$ is such that $\| \Delta M_t \| \geq 2S_{t-}$, one
  has
  \[
  \| \Delta M_t \| + 2S_{t-} \leq 2 \| \Delta M_t \| = 2S_t,
  \]
  hence $\| \Delta M_t \| \leq 2(S_t-S_{t-})$, which implies that the
  process $K^1$ has variation bounded by $2S_\infty$. Let $K=K^1-K^2$,
  where $K^2$ is the (predictable) compensator of $K^1$, so that, in
  particular, $K$ is a martingale.  Furthermore, let us set
  $L^1:=M-K^1$, $L^2:=-K^2$, and $L=L^1-L^2$. In particular, $L=M-K$
  is a martingale. It remains only to prove (ii): since $L=L^1+K^2$
  and $K^2$ is predictable, it follows that, for any totally
  inaccesible jump time $T$, one has $\Delta L_T=\Delta L^1_T$
  (e.g. by \cite[Prop.~7.7]{Met}), hence also, by \eqref{eq:zompa} and
  by definition of $L^1$, $\norm{\Delta L_T} \leq 2S_{T-}$.  Moreover,
  for any preditable jump time $T$, one has $\E[\Delta
  L_T|\mathcal{F}_{T-}]=0$ because $L$ is a martingale, and $\E[\Delta
  L^2_T|\mathcal{F}_{T-}]=\Delta L^2_T$ because $L^2$ is predictable,
  thus also $\Delta L^2_T = -\E[\Delta L^1_T|\mathcal{F}_{T-}]$. This
  implies, again by \eqref{eq:zompa} and by definition of $L^1$, that
  $\norm{\Delta L^2_T} \leq 2 \E[S_{T-}|\mathcal{F}_{T-}] = 2S_{T-}$,
  hence also that $\norm{\Delta L_T} \leq \norm{\Delta L^1_T} +
  \norm{\Delta L^2_T} \leq 4S_{T-}$.  By general decomposition results
  for stopping times (see e.g. \cite[\S7]{Met}), it follows that
  $\norm{\Delta L} \leq 4S_-$.
\end{proof}

\subsection{Real martingales}
Throughout this section we shall use the symbols $L$, $K$ and $S$ as
they have been defined in Lemma \ref{lm:davis} above.

We start with a simple corollary of Davis' decomposition and
Proposition \ref{prop:BDG-fv}.
\begin{lemma}     \label{lm:dKS}
  Let $M$ be a real martingale with Davis' decomposition $M=K+L$. Then
  one has $\norm*{\int_0^\infty |dK|}_{\L_p} \lesssim_p
  \norm[\big]{S_\infty}_{\L_p}$.
\end{lemma}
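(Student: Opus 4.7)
The plan is to control the two pieces of $K$ separately using Davis' decomposition together with Proposition \ref{prop:BDG-fv}. Since $K = K^1 - K^2$, triangle inequality for the variation gives
\[
\int_0^\infty |dK| \;\leq\; \int_0^\infty |dK^1| + \int_0^\infty |dK^2|,
\]
and Davis' decomposition already supplies the bound $\int_0^\infty |dK^1| \leq 2 S_\infty$, which handles the first term immediately in $\L_p$. The whole problem is thus reduced to estimating the total variation of the compensator $K^2$ of $K^1$.

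The obstacle is that Proposition \ref{prop:BDG-fv} is stated for \emph{increasing} processes, whereas $K^1$ is only of finite variation (its jumps $\Delta M_s \mathbf{1}_{\{|\Delta M_s| \geq 2 S_{s-}\}}$ can have either sign). To get around this I would decompose $K^1$ into its positive and negative jump parts,
\[
V^{1,\pm}_t := \sum_{s \leq t} (\Delta M_s)^\pm \mathbf{1}_{\{|\Delta M_s| \geq 2 S_{s-}\}},
\]
so that $K^1 = V^{1,+} - V^{1,-}$ and both $V^{1,\pm}$ are increasing pure-jump processes with $V^{1,+}_\infty + V^{1,-}_\infty = \int_0^\infty |dK^1| \leq 2 S_\infty$. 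By linearity and uniqueness of the dual predictable projection, $K^2 = \widetilde{V^{1,+}} - \widetilde{V^{1,-}}$, so
\[
\int_0^\infty |dK^2| \;\leq\; \widetilde{V^{1,+}}_\infty + \widetilde{V^{1,-}}_\infty.
\]

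Now Proposition \ref{prop:BDG-fv} applies to each of the two increasing processes $V^{1,\pm}$, giving
\[
\bigl\| \widetilde{V^{1,\pm}}_\infty \bigr\|_{\L_p} \;\leq\; p \, \bigl\| V^{1,\pm}_\infty \bigr\|_{\L_p} \;\leq\; 2p\, \bigl\| S_\infty \bigr\|_{\L_p}.
\]
Combining all the estimates via the triangle inequality in $\L_p$ yields
\[
\Bigl\| \int_0^\infty |dK| \Bigr\|_{\L_p} \;\leq\; 2\, \bigl\| S_\infty \bigr\|_{\L_p} + 4p\, \bigl\| S_\infty \bigr\|_{\L_p} \;\lesssim_p\; \bigl\| S_\infty \bigr\|_{\L_p},
\]
which is the claim. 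The main subtlety (beyond the signed/unsigned issue above) is just checking that the decomposition $K^2 = \widetilde{V^{1,+}} - \widetilde{V^{1,-}}$ is legitimate, which follows from uniqueness of the compensator together with the fact that $V^{1,\pm}$ are integrable (being bounded by $2S_\infty$, which lies in $\L_p$ by hypothesis — if $S_\infty \notin \L_p$ the inequality is trivial).
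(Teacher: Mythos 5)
Your proposal is correct and follows essentially the same route as the paper: both split $K^1$ into its increasing positive-jump and negative-jump parts, use linearity and uniqueness of the compensator to split $K^2$ accordingly, and then apply Proposition \ref{prop:BDG-fv} to each increasing piece before recombining via the triangle inequality (the paper writes $K^1=K^{1+}+K^{1-}$ with $K^{1-}$ decreasing, which is your $V^{1,+}-V^{1,-}$ up to a sign convention). The resulting constants differ only trivially, so there is nothing further to add.
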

\begin{proof}
  Note that we can write $K^1=K^{1+}+K^{1-}$, where $K^{1+}$ has only
  positive jumps and $K^{1-}$ has only negative jumps. Then it is
  immediate to see that $K^2=\widetilde{K^{1+}}+\widetilde{K^{1-}}$,
  which, taking Proposition \ref{prop:BDG-fv} into account, implies
  that
  \begin{align*}
  \norm*{\int_0^\infty |dK|}_{\L_p} &\leq
  \norm*{\int_0^\infty |dK^{1+}|}_{\L_p} +
  \left\| \int_0^\infty |d\widetilde{K^{1+}}| \right\|_{\L_p} +
  \left\| \int_0^\infty |dK^{1-}| \right\|_{\L_p} +
  \left\| \int_0^\infty |d\widetilde{K^{1-}}| \right\|_{\L_p}\\
  & \leq (p+1) \left\| \int_0^\infty |dK^{1+}| \right\|_{\L_p}
  + (p+1) \left\| \int_0^\infty |dK^{1-}| \right\|_{\L_p}.
  \end{align*}
  Since the total variation of both $K^{1+}$ and $K^{1-}$ is bounded
  by $2S_\infty$, we conclude that
  \[
  \norm*{\int_0^\infty |dK|}_{\L_p} \leq 4(p+1)
  \norm[\big]{S_\infty}_{\L_p}.
  \qedhere
  \]
\end{proof}

\begin{proof}[Proof of Theorem \ref{thm:BDG} ($\H=\erre$)]
  Note that the upper bound for $p\geq 2$ has already been addressed. We
  treat the remaining three cases separately.

  \noindent\textsc{Lower bound, $1 \leq p<2$.} Let $M=L+K$ be the Davis'
  decomposition of the martingale $M$. Then one has
  $[M,M]^{1/2}_\infty \leq [L,L]^{1/2}_\infty + [K,K]^{1/2}_\infty$,
  hence also
  \[
  \norm[\big]{[M,M]^{1/2}_\infty}_{\L_p} \leq
  \norm[\big]{[L,L]^{1/2}_\infty}_{\L_p} +
  \norm[\big]{[K,K]^{1/2}_\infty}_{\L_p},
  \]
  where, by Propositions \ref{prop:clb1} and \ref{prop:clb2-},
  \[
  \norm[\big]{[L,L]^{1/2}_\infty}_{\L_p} \lesssim_p
  \norm[\big]{L^*_\infty}_{\L_p} + \norm[\big]{S_\infty}_{\L_p}.
  \]
  Note that $M=L+K$ also implies $L^* \leq M^*+K^*$, and, by
  definition of $S$, it is immediate that $S_\infty \leq 2
  M^*_\infty$. Therefore, taking Lemma \ref{lm:dKS} into
  account, one has
  \begin{align*}
  \norm[\big]{[M,M]^{1/2}_\infty}_{\L_p} &\lesssim_p
  \norm[\big]{M^*_\infty}_{\L_p} + \norm[\big]{K^*_\infty}_{\L_p}
  + \norm[\big]{[K,K]^{1/2}_\infty}_{\L_p} \leq
  \norm[\big]{M^*_\infty}_{\L_p} + 2 \left\| \int_0^\infty |dK| \right\|_{\L_p}\\
  &\lesssim_p \norm[\big]{S_\infty}_{\L_p} \lesssim \norm[\big]{M^*_\infty}_{\L_p}.
  \qedhere
  \end{align*}

\medskip

  \noindent\textsc{Upper bound, $1 \leq p < 2$.}
  Taking into account Propositions \ref{prop:cub1} and
  \ref{prop:cub2-}, the estimates $\|M_\infty^*\|_{\L_p} \leq
  \|L_\infty^*\|_{\L_p} + \|K_\infty^*\|_{\L_p}$ and $[L,L]^{1/2}
  \leq [M,M]^{1/2} + [K,K]^{1/2}$ yield
  \begin{align*}
  \bigl\| M_\infty^* \bigr\|_{\L_p} &\lesssim_p \bigl\|
  [L,L]_\infty^{1/2} \bigr\|_{\L_p} + \bigl\| S_\infty \bigr\|_{\L_p}
  + \bigl\| K_\infty^* \bigr\|_{\L_p}\\
  &\leq \norm[\big]{[M,M]_\infty^{1/2}}_{\L_p} + \norm[\big]{S_\infty}_{\L_p}
  + \norm[\big]{[K,K]_\infty^{1/2}}_{\L_p} + \norm[\big]{K^*_\infty}_{\L_p}.
  \end{align*}
  Noting that 
  \[
  S_\infty = (\Delta M)^* = \bigl( \bigl(|\Delta M|^2\bigr)^* \bigr)^{1/2}
  \leq \Bigl( \sum |\Delta M|^2 \Bigr)^{1/2} \leq [M,M]_\infty^{1/2},
  \]
  hence $\norm[\big]{S_\infty}_{\L_p} \leq
  \norm[\big]{[M,M]_\infty^{1/2}}_{\L_p}$, we are left with
  \[
  \norm[\big]{M_\infty^*}_{\L_p} \lesssim_p
  \norm[\big]{[M,M]_\infty^{1/2}}_{\L_p}
  + \norm[\big]{[K,K]_\infty^{1/2}}_{\L_p}
  + \norm[\big]{K^*_\infty}_{\L_p},
  \]
  where, repeating an argument used in the previous part of the proof,
  \[
  \norm[\big]{[K,K]_\infty^{1/2}}_{\L_p}
  + \norm[\big]{K^*_\infty}_{\L_p}
  \lesssim_p
  \left\| \int_0^\infty |dK|\, \right\|_{\L_p}
  \lesssim \bigl\| S_\infty \bigr\|_{\L_p} \leq 
  \bigl\| [M,M]_\infty^{1/2} \bigr\|_{\L_p}.
  \]
  Collecting estimates, the proof is completed.

\medskip

  \noindent\textsc{Lower bound, $p>2$.} Davis' decomposition $M=L+K$ implies
  $[M,M]_\infty^{1/2} \leq [L,L]_\infty^{1/2} + [K,K]_\infty^{1/2}$,
  hence also, taking Proposition \ref{prop:clb2+} into account,
  \[
  \norm[\big]{[M,M]^{1/2}_\infty}_{\L_p} \leq 
  \norm[\big]{[L,L]^{1/2}_\infty}_{\L_p} +
  \norm[\big]{[K,K]^{1/2}_\infty}_{\L_p}
  \lesssim_p \norm[\big]{L^*_\infty}_{\L_p} + \norm[\big]{S_\infty}_{\L_p}
  + \norm[\big]{[K,K]^{1/2}_\infty}_{\L_p}.
  \]
  We can further estimate the terms on the right-hand side as follows:
  \begin{align*}
    \norm[\big]{L^*_\infty}_{\L_p} &\leq \norm[\big]{M^*_\infty}_{\L_p} +
    \norm[\big]{K^*_\infty}_{\L_p} \leq \norm[\big]{M^*_\infty}_{\L_p} +
    \left\| \int_0^\infty |dK| \right\|_{\L_p},\\
    \norm[\big]{S_\infty}_{\L_p} &\leq 2 \norm[\big]{M^*_\infty}_{\L_p},\\
    \norm[\big]{[K,K]^{1/2}_\infty}_{\L_p} &\leq \left\| \int_0^\infty |dK|
    \right\|_{\L_p},
  \end{align*}
  hence the proof is complete, recalling that by Lemma \ref{lm:dKS}
  \[
  \left\| \int_0^\infty |dK| \right\|_{\L_p} \lesssim_p 
  \norm[\big]{S_\infty}_{\L_p} \lesssim \norm[\big]{M^*_\infty}_{\L_p}.
  \qedhere
  \]
\end{proof}

\subsection{Hilbert-space-valued martingales}
\begin{lemma}[Upper bounds by lower bounds] \label{lm:ubyl} Let $p \in
  ]{1,\infty}[$ and assume that
  \[
  \norm[\big]{[N,N]_\infty^{1/2}}_{\L_p} \lesssim_p
  \norm[\big]{N^*_\infty}_{\L_p}
  \]
  for any $\H$-valued martingale $N$. Then, for any $\H$-valued
  martingale $M$, one has
  \[
  \norm[\big]{M^*_\infty}_{\L_{p'}} \lesssim_{p'}
  \norm[\big]{[M,M]_\infty^{1/2}}_{\L_{p'}},
  \]
  where $p'$ is the conjugate exponent of $p$, i.e. $1/p+1/p' = 1$.
\end{lemma}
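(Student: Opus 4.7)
The plan is to argue by duality, reducing the upper bound for $M$ in $\L_{p'}$ to the lower bound hypothesis applied to a suitably chosen auxiliary martingale $N$ in $\L_p$.

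First, since $p' \in \,]1,\infty[$, Doob's maximal inequality allows us to bound $\norm{M^*_\infty}_{\L_{p'}} \lesssim_{p'} \norm{M_\infty}_{\L_{p'}(\H)}$. The proof then reduces to controlling $\norm{M_\infty}_{\L_{p'}(\H)}$. By the standard duality of Bochner spaces, we write
\[
\norm[\big]{M_\infty}_{\L_{p'}(\H)} = \sup_{\xi \in B_1(\L_p(\H))} \E \ip{M_\infty}{\xi},
\]
where $B_1(\L_p(\H))$ denotes the unit ball of $\L_p(\H)$. For each such $\xi$, define the $\H$-valued martingale $N_t := \E[\xi \mid \mathcal{F}_t]$, so that $N_\infty = \xi$ and $\norm{N_\infty}_{\L_p(\H)} \leq 1$.

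Next I would use the integration-by-parts formula for Hilbert-space valued semimartingales, which (assuming $M_0 = N_0 = 0$) reads
\[
\ip{M_t}{N_t} = \int_0^t \ip{M_{s-}}{dN_s} + \int_0^t \ip{N_{s-}}{dM_s} + [M,N]_t.
\]
A standard localization argument (combined with the fact that $M$ and $N$ are true martingales with $M_\infty \in \L_{p'}(\H)$, $N_\infty \in \L_p(\H)$, and H\"older) shows that the two stochastic-integral terms are uniformly integrable martingales with zero expectation, so $\E\ip{M_\infty}{\xi} = \E[M,N]_\infty$. Applying the Kunita–Watanabe inequality pathwise, followed by H\"older's inequality with the conjugate exponents $p',p$, yields
\[
\E \ip{M_\infty}{\xi} \leq \E [M,M]_\infty^{1/2}[N,N]_\infty^{1/2}
\leq \norm[\big]{[M,M]_\infty^{1/2}}_{\L_{p'}}
\norm[\big]{[N,N]_\infty^{1/2}}_{\L_p}.
\]

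Now I invoke the standing hypothesis on the lower bound for $N$, together with Doob's inequality in $\L_p$ (which applies because $p > 1$):
\[
\norm[\big]{[N,N]_\infty^{1/2}}_{\L_p} \lesssim_p \norm[\big]{N^*_\infty}_{\L_p}
\lesssim_p \norm[\big]{N_\infty}_{\L_p(\H)} = \norm{\xi}_{\L_p(\H)} \leq 1.
\]
Taking the supremum over $\xi$ and combining with the initial Doob step gives the desired estimate with a constant depending only on $p'$. The main technical obstacle is the justification that the two stochastic integrals in the vector-valued integration-by-parts formula have zero expectation at infinity; this requires a localization via stopping times $(\tau_n)$ reducing both $M$ and $N$, applying Kunita–Watanabe and H\"older at each stage to obtain the required uniform integrability, and then passing to the limit. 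Apart from this standard but nontrivial verification, everything else is bookkeeping of exponents.
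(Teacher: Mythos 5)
Your proposal is correct and follows essentially the same argument as the paper: Doob's inequality to reduce to $\norm{M_\infty}_{\L_{p'}(\H)}$, duality with $\xi \in B_1(\L_p(\H))$, the martingale $N_t = \E[\xi\mid\mathcal{F}_t]$, the identity $\E(M_\infty,\xi)=\E[M,N]_\infty$, and then Kunita--Watanabe, H\"older, the hypothesis, and Doob again. The only difference is that you spell out the localization needed to justify $\E(M_\infty,N_\infty)=\E[M,N]_\infty$, which the paper takes for granted.
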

\begin{proof}
  Note that the (topological and algebraic) dual of $\L_p(\H)$ is
  $\L_{p'}(\H)$, with duality form $\E(\cdot,\cdot)$. Therefore,
  denoting the unit ball of $\L_p(\H)$ by $B_1$, one has, recalling
  Doob's inequality,
  \[
  \norm[\big]{M^*_\infty}_{\L_{p'}} \lesssim_{p'} 
  \norm[\big]{M_\infty}_{\L_{p'}(\H)} = \sup_{\xi \in B_1} \E(M_\infty,\xi).
  \]
  Let $\xi \in \L_p(\H)$, $\E\norm{\xi}^p\leq 1$, be arbitrary, and
  consider the martingale $t \mapsto N_t:=\E[\xi|\mathcal{F}_t]$,
  $N_\infty:=\xi$. Kunita-Watanabe's and H\"older's inequalities yield
  \begin{align*}
  \E(M_\infty,\xi) \equiv \E(M_\infty,N_\infty)
  &= \E[M,N]_\infty\\
  &\leq \E[M,M]_\infty^{1/2}[N,N]_\infty^{1/2}
  \leq \norm{[M,M]_\infty^{1/2}}_{\L_{p'}} \norm{[N,N]_\infty^{1/2}}_{\L_p}.
  \end{align*}
  By the hypothesis and Doob's inequality, one also has
  \[
  \norm{[N,N]_\infty^{1/2}}_{\L_p} \lesssim_p
  \norm{N_\infty}_{\L_p(\H)} \equiv \norm{\xi}_{\L_p(\H)} \leq 1,
  \]
  therefore we are left with $\E(M_\infty,\xi) \lesssim_p
  \norm{[M,M]_\infty^{1/2}}_{\L_{p'}}$. Since $\xi\in B_1$ is
  arbitrary, the claim is proved.
\end{proof}

\begin{lemma}[Interpolation of lower bounds]     \label{lm:interp}
  Let $1 < p_1 < p_2 < \infty$, and assume that, for any $\H$-valued
  martingale $M$, one has
  \[
  \norm[\big]{[M,M]_\infty^{1/2}}_{\L_{p_i}} \lesssim_{p_i}
  \norm[\big]{M^*_\infty}_{\L_{p_i}} \qquad \forall i \in \{1,2\}.
  \]
  Then one has $\norm[\big]{[M,M]_\infty^{1/2}}_{\L_p} \lesssim_p
  \norm[\big]{M^*_\infty}_{\L_p}$ for all $p \in [p_1,p_2]$.
\end{lemma}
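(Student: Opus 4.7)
The plan is to realize the association $M \mapsto [M,M]_\infty^{1/2}$ as a sublinear operator on $\H$-valued random variables and invoke the interpolation Theorem \ref{thm:CZ}. Since $p_1>1$, Doob's inequality gives $\norm{M^*_\infty}_{\L_p} \eqsim_p \norm{M_\infty}_{\L_p(\H)}$ for every $p \in [p_1,p_2]$, and we may assume throughout that $M$ is closed with $M_\infty \in \L_p(\H)$ (if the right-hand side of the desired inequality is infinite there is nothing to prove). Hence it is enough to establish $\norm[\big]{[M,M]_\infty^{1/2}}_{\L_p} \lesssim_p \norm[\big]{M_\infty}_{\L_p(\H)}$ for $p \in [p_1,p_2]$, together with the analogous reformulation of the two endpoint hypotheses.

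For $\xi \in \L_{p_1}(\H) + \L_{p_2}(\H)$, denote by $M^\xi$ the closed c\`adl\`ag martingale $t \mapsto \E[\xi \mid \mathcal{F}_t]$, and define the operator
\[
T\xi := [M^\xi, M^\xi]_\infty^{1/2}.
\]
Positive homogeneity $T(\alpha\xi) = |\alpha|\,T\xi$ is immediate from $M^{\alpha\xi} = \alpha M^\xi$. For subadditivity, given $\xi$ and $\eta$ in the same sum space one has $M^{\xi+\eta} = M^\xi + M^\eta$, so that
\[
[M^{\xi+\eta},M^{\xi+\eta}]_\infty
= [M^\xi,M^\xi]_\infty + 2[M^\xi,M^\eta]_\infty + [M^\eta,M^\eta]_\infty,
\]
and the Kunita-Watanabe inequality bounds the cross term pathwise by $2\,[M^\xi,M^\xi]_\infty^{1/2}[M^\eta,M^\eta]_\infty^{1/2}$, yielding $T(\xi+\eta) \leq T\xi + T\eta$ pointwise. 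Thus $T$ is sublinear in the sense of Section \ref{ssec:RT}.

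By the two hypotheses, combined with Doob's inequality, $T$ is bounded from $\L_{p_i}(\H)$ to $\L_{p_i}$ for $i \in \{1,2\}$. Theorem \ref{thm:CZ}, applied with these two pairs as endpoints, then yields $\norm{T}_{\L_p(\H) \to \L_p} < \infty$ for every $p \in [p_1,p_2]$, and reverting from $\norm{M_\infty}_{\L_p(\H)}$ to $\norm{M^*_\infty}_{\L_p}$ via $\norm{M_\infty}_{\L_p(\H)} \leq \norm{M^*_\infty}_{\L_p}$ finishes the proof. The argument is essentially routine once the correct operator is identified; the only point that merits a line of computation is the sublinearity of $T$, which rests on Kunita-Watanabe, and there is no genuine obstacle.
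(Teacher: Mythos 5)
Your proof is correct and follows essentially the same route as the paper: identify martingales with their terminal values via Doob's inequality and martingale convergence, view $M_\infty \mapsto [M,M]_\infty^{1/2}$ as a sublinear operator, and interpolate with Theorem \ref{thm:CZ}. The only difference is that you spell out the sublinearity via Kunita--Watanabe, which the paper leaves as an immediate observation.
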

\begin{proof}
  Thanks to the martingale convergence theorem and Doob's inequality,
  we can identify the space of $\H$-valued martingales such that
  $\norm{M^*_\infty}_{\L_p}<\infty$, $p>1$, with
  $L_p(\Omega \to \H,\mathcal{F}_\infty,\P)$. For $i\in\{1,2\}$, let us
  define, with a slight abuse of notation, the operator
  \begin{align*}
    T: \L_{p_i}(\H) &\to \L_{p_i}\\
       M_\infty &\mapsto [M,M]^{1/2}_\infty,
  \end{align*}
  which is immediately seen to be sublinear in the sense of
  \S\ref{ssec:RT}. Taking Doob's inequality into account, one has
  \[
    \norm[\big]{TM}_{\L_{p_i}} \equiv \norm[\big]{[M,M]_\infty^{1/2}}_{\L_{p_i}}
    \lesssim_{p_i} \norm[\big]{M_\infty^*}_{\L_{p_i}} 
    \lesssim_{p_i} \norm[\big]{M_\infty}_{\L_{p_i}(\H)} 
    \qquad \forall i \in \{1,2\},
  \]
  i.e. $T$ is bounded from $\L_{p_1}(\H)$ to $\L_{p_1}$, and
  from $\L_{p_2}(\H)$ to $\L_{p_2}$. Therefore, by Theorem \ref{thm:CZ} we
  infer that $T$ is bounded from $\L_p(\H)$ to $\L_p$ for all $p \in
  \mathopen]p_1,p_2\mathclose[$.
\end{proof}

\begin{proof}[Proof of Theorem \ref{thm:BDG}]
  If $p=1$, the proof of the previous section still works, if one
  appeals to Proposition \ref{prop:veme}, which implies that
  \[
  \norm*{\int_0^\infty |dK|}_{\L_1} \lesssim 
  \norm*{\int_0^\infty |dK^1|}_{\L_1} \lesssim
  \norm{S_\infty}_{\L_1} \lesssim \norm[\big]{M^*_\infty}_{\L_1}.
  \]
  By the results in Section \ref{sec:1st} we know that the upper bound
  holds for all $p \geq 2$, and that the lower bound holds for $p=2$
  and for all $p \geq 4$. Therefore, by Lemma \ref{lm:interp}, the
  lower bound in fact holds for all $p \geq 2$. In turn, Lemma
  \ref{lm:ubyl} yields the validity of the upper bound for all $p \in
  ]1,2[$ (hence for all $p \geq 1$), provided the lower bound holds
  for all $p \in ]1,2[$, which we shall prove now: letting $M=L+K$ be
  the Davis decomposition of $M$ and proceeding as in the proof for
  real-valued martingales of the previous section, one obtains
  \[
  \norm[\big]{[M,M]_\infty^{1/2}}_{\L_p} \lesssim_p
  \norm[\big]{M^*_\infty}_{\L_p} +
  \norm[\big]{K^*_\infty}_{\L_p} +
  \norm[\big]{[K,K]_\infty^{1/2}}_{\L_p}.
  \]
  Applying the upper bound to the martingale $K$, one has
  $\norm[\big]{K^*_\infty}_{\L_p} \lesssim_p
  \norm[\big]{[K,K]_\infty^{1/2}}_{\L_p}$, thus also
  \[
  \norm[\big]{[M,M]_\infty^{1/2}}_{\L_p} \lesssim_p
  \norm[\big]{M^*_\infty}_{\L_p} +
  \norm[\big]{[K,K]_\infty^{1/2}}_{\L_p}.
  \]
  To complete the proof, we only have to show that the second term on
  the right-hand side is controlled by the $\L_p$-norm of
  $M^*_\infty$. To this purpose, recall that $K=K^1-K^2$, where $K^1$,
  $K^2$ have integrable variation and $K^2$ is the (predictable)
  compensator of $K^1$. Therefore one has
  \[ 
  [K,K]^{1/2}_\infty \leq [K^1,K^1]^{1/2}_\infty + [K^2,K^2]^{1/2}_\infty
  \]
  and, by a reasoning already used in the proof of Davis'
  decomposition,
  \[
  [K^2,K^2]_\infty = \sum_{n\in\enne} \norm{\Delta K^2_{T_n}}^2,
  \]
  where $\Delta K^2_{T_n} = \E[\Delta K_{T_n}^1|\mathcal{F}_{T_n-}]$
  and $(T_n)_{n\in\enne}$ is a sequence of predictable stopping times.
  Applying Theorem \ref{thm:stein} to the discrete-time process $n
  \mapsto K^1_{T_n}$, one obtains
  $\norm[\big]{[K^2,K^2]^{1/2}_\infty}_{\L_p} \lesssim_p
  \norm[\big]{[K^1,K^1]^{1/2}_\infty}_{\L_p}$, hence also
  \begin{align*}
  \norm[\big]{[K,K]^{1/2}_\infty}_{\L_p} &\lesssim
  \norm[\big]{[K^1,K^1]^{1/2}_\infty}_{\L_p} +
  \norm[\big]{[K^2,K^2]^{1/2}_\infty}_{\L_p}\\
  &\lesssim_p \norm[\big]{[K^1,K^1]^{1/2}_\infty}_{\L_p}
  \leq \norm*{\int_0^\infty|dK^1|}_{\L_p}
  \lesssim \norm[\big]{M^*_\infty}_{\L_p}.
  \qedhere
  \end{align*}
\end{proof}

\let\oldbibliography\thebibliography
\renewcommand{\thebibliography}[1]{%
  \oldbibliography{#1}%
  \setlength{\itemsep}{-1pt}%
}

\bibliographystyle{amsplain}
\bibliography{ref}

\end{document}